\documentclass[10pt]{amsart}
\usepackage{amssymb,MnSymbol}
\usepackage{amsthm,amsmath}
\usepackage{cite}

\title[A classification of polynomial functions]{A classification of polynomial functions satisfying the Jacobi identity over integral domains}

\author{Jean-Luc Marichal}
\address{Mathematics Research Unit, University of Luxembourg, Maison du Nombre, 6, avenue de la Fonte, L-4364 Esch-sur-Alzette, Luxembourg}
\email{jean-luc.marichal[at]uni.lu}

\author{Pierre Mathonet}
\address{University of Li\`ege, Department of Mathematics, All\'ee de la D\'ecouverte 12 - B37, B-4000 Li\`ege, Belgium}
\email{p.mathonet[at]ulg.ac.be}

\date{March 10, 2017}

\theoremstyle{plain}
\newtheorem{theorem}{Theorem}

\newtheorem{proposition}[theorem]{Proposition}
\newtheorem{corollary}[theorem]{Corollary}

\newtheorem*{main}{Main Theorem}

\theoremstyle{definition}

\theoremstyle{remark}

\newtheorem*{remark}{Remark}
\newtheorem*{note}{Note}
\newtheorem{claim}{Claim}

\newcommand{\calR}{\mathcal{R}}
\newcommand{\calS}{\mathcal{S}}
\newcommand{\chR}{\mathrm{char}(\calR)}
\newcommand{\modp}{~(\mathrm{mod}~p)}

\begin{document}
\begin{abstract}
The Jacobi identity is one of the properties that are used to define the concept of Lie algebra and in this context is closely related to associativity. In this paper we provide a complete description of all bivariate polynomials that satisfy the Jacobi identity over infinite integral domains. Although this description depends on the characteristic of the domain, it turns out that all these polynomials are of degree at most one in each indeterminate.
\end{abstract}

\keywords{Jacobi's identity, polynomial, integral domain.}

\subjclass[2010]{Primary 39B72; Secondary 13B25, 17B99.}
\maketitle

\section{Introduction}

Let $\calR$ be an infinite integral domain with identity. In this paper we are interested in a classification of all bivariate polynomials $P$ over $\calR$ satisfying Jacobi's identity
\begin{equation}\label{eq:Jacobi}
P(P(x,y),z)+P(P(y,z),x)+P(P(z,x),y) ~=~ 0.
\end{equation}

To give a simple example, consider the set $\calR=\mathbb{Z}_3[x]$ of univariate polynomials whose coefficients are in $\mathbb{Z}_3$. One can easily verify that the bivariate polynomial $P$ over $\mathbb{Z}_3[x]$ defined by
$$
P(A,B) ~=~ (1-x^2)AB+(x+1)(1-x^2)(A+B)+x(x+1)(1-x-x^2)
$$
satisfies Jacobi's identity \eqref{eq:Jacobi}.

As it is well known, the Jacobi identity is one of the defining properties of Lie algebras. Recall that a \emph{Lie algebra} (see, e.g., \cite{Gil74,Hal03,Jac79}) is a vector space $\mathfrak{g}$ together with a binary map $[\cdot{\,},\cdot]\colon\mathfrak{g}\times\mathfrak{g}\to\mathfrak{g}$, called \emph{Lie bracket}, such that
\begin{enumerate}
\item[1.] $[\cdot{\,},\cdot]$ is bilinear,
\item[2.] $[x,y]=-[y,x]$ for all $x,y\in\mathfrak{g}$.
\item[3.] $[[x,y],z]+[[y,z],x]+[[z,x],y]=0$ for all $x,y,z\in\mathfrak{g}$.
\end{enumerate}
The second condition is usually called skew-symmetry while the third one is known as the Jacobi identity. By using a prefix notation for the Lie bracket, the Jacobi identity simply becomes the functional equation given in \eqref{eq:Jacobi}.

The classical associativity property is closely connected to Lie algebras in the following way (see, e.g., \cite[p.~6]{Jac79}). The Lie bracket defined by $[x,y]=xy-yx$ on any associative algebra satisfies the three properties above, including Jacobi's identity. This is one of the reasons why ``the Jacobi identity can be viewed as a substitute for associativity'' \cite[p.~54]{Hal03}.

We now state our main result, which provides a complete description of the possible polynomial solutions over $\calR$ of Jacobi's identity \eqref{eq:Jacobi}. Although the form of these polynomials depends on the characteristic of $\calR$, they are all of degree at most one in each indeterminate.

\begin{main}
Consider a bivariate polynomial $P\in\calR[x,y]$.
\begin{itemize}
\item If $\chR\neq 3$, then $P$ satisfies Jacobi's identity iff there exist $B,C\in\calR$ satisfying $B^2+BC+C=0$ such that
\[
P(x,y) ~=~ Bx+Cy,
\]
\item If $\chR=3$, then $P$ satisfies Jacobi's identity iff one of the following conditions holds:
\begin{itemize}
\item there exist $A,B,D\in\calR$ satisfying $AD=B^2-B$ such that
\[
P(x,y) ~=~ Axy+B(x+y)+D,
\]
\item there exist $B,C,D\in\calR$ satisfying $B^2+BC+C=0$ such that
\[
P(x,y) ~=~ Bx+Cy+D.
\]
\end{itemize}
\end{itemize}
\end{main}

The reader interested in possible generalizations of the Main Theorem might want to consider extensions of functional equation \eqref{eq:Jacobi} to $n$-indeterminate polynomials, by analogy with $n$-ary generalizations of Lie algebras, where Jacobi's identity involves an $n$-linear bracket. In this direction we remark that a complete classification of $n$-ary associative polynomials over $\calR$ can be found in \cite{MarMat11} and that, in the special case when $\calR$ is the complex plane $\mathbb{C}$, this classification was recently generalized to $n$-ary associative formal power series in \cite{Fri16}.

\begin{remark}
In the literature on Lie algebras the Jacobi identity is sometimes given in one of the following alternative forms (which are equivalent to the one above under bilinearity and skew-symmetry):
\begin{eqnarray}
& [x,[y,z]]+[y,[z,x]]+[z,[x,y]] ~=~ 0, &\label{eq:J2}\\
& [x,[y,z]] ~=~ [[x,y],z]+[y,[x,z]], &\label{eq:J3}\\
& [[x,y],z] ~=~ [x,[y,z]]+[[x,z],y]. &\label{eq:J4}
\end{eqnarray}
It is however easy to see that $P$ satisfies the functional equation corresponding to \eqref{eq:J2} iff the polynomial $P'$ defined by $P'(x,y)=P(y,x)$ satisfies \eqref{eq:Jacobi}. As far as equations \eqref{eq:J3} and \eqref{eq:J4} are concerned, one can show that the corresponding functional equations have no nonzero solution. The proof of this latter observation is given in Appendix~\ref{app:B}.
\end{remark}

\begin{note}
The problem addressed in this paper was suggested by J\"org Tomaschek \cite{Toma}, who in turn was asked this question by Wolfgang Prager (University of Graz, Austria) while the latter was studying local analytic solutions of the Bokov functional equation that appears in theoretical physics (see \cite{Bok73,Yag82}).
\end{note}

\section{Technicalities and proof of the Main Theorem}

We use the following notation throughout this paper. For any integer $m\geq 1$ and any prime $p\geq 2$, we denote by $s_m(p)$ the set of positive integers expressible as sums of $m$ powers of $p$, that is, integers $n$ whose base $p$ expansions $n=\sum_{i=0}^k n_i{\,}p^i$ (with $0\leq n_i<p$ for $i=0,\ldots,k$) satisfy $\sum_{i=0}^k n_i=m$. We also use the Kronecker delta symbol: $\delta_{i,j}=1$, if $i=j$, and $\delta_{i,j}=0$, if $i\neq j$. For any bivariate polynomial $P=P(x,y)$, we let $\deg(P)$ denote the degree of $P$, that is, the highest degree of the homogeneous terms of $P$ in both variables. We also let $\deg_1(P)$ (resp.\ $\deg_2(P)$) denote the degree of $P$ in its first (resp.\ second) variable. For any nonnegative integer $k\leq\deg(P)$, unless otherwise stated we let $P_k$ denote the homogeneous component of degree $k$ of $P$, that is, the polynomial obtained from $P$ by considering the terms of degree $k$ only. For any monomial $M$ of $P$, we let $[M]P(x,y)$ denote the coefficient of $M$ in $P(x,y)$ (we let $[M]P(x,y)=0$ if $M$ is not a monomial of $P$), and similarly for polynomials in more than two indeterminates. Finally, we define the following trivariate polynomial
$$
J_P(x,y,z) ~=~ P(P(x,y),z)+P(P(y,z),x)+P(P(z,x),y).
$$

Recall that the definition of $\calR$ enables us to identify the ring $\calR[x_1,\ldots,x_n]$ of polynomials in $n$ indeterminates over $\calR$ with the ring of polynomial functions from $\calR^n$ to $\calR$. Recall also that if $\chR=p>0$, then $p$ must be prime. In this case we have $(x+y)^p=x^p+y^p$ for any $x,y\in\calR$ and this identity (often referred to as the freshman's dream) immediately extends to any sum of more than two terms.

In this paper we will often make use of the following theorem, established in 1878 by E.~Lucas \cite{Luc78a,Luc78b,Luc78c}. For a more recent reference, see \cite{Fin47}.

\begin{theorem}[Lucas' theorem]
For any integers $n,m\geq 0$ and any prime $p\geq 2$, the following congruence relation holds:
$$
{n\choose m} ~\equiv ~\prod_{i=0}^k{n_i\choose m_i}{\quad}\modp,
$$
where $n=\sum_{i=0}^{k}n_ip^i$ and $m=\sum_{i=0}^{k}m_ip^i$ are the base $p$ expansions of $n$ and $m$, respectively. This uses the convention that ${a\choose b}=0$ for any integers $a,b$ such that $0\leq a<b$.
\end{theorem}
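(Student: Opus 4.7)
The plan is to prove Lucas' theorem by computing the polynomial $(1+X)^n$ in two different ways inside $\calR[X]$, working modulo $p$, and then comparing coefficients of $X^m$.

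First I would establish by induction on $i\geq 0$ that $(1+X)^{p^i}\equiv 1+X^{p^i}\modp$. The base case $i=0$ is trivial and the case $i=1$ is precisely the freshman's dream $(1+X)^p\equiv 1+X^p\modp$ already recalled in the paper. For the inductive step I would raise the congruence at step $i-1$ to the $p$th power and apply the freshman's dream once more to $(1+X^{p^{i-1}})^p$, yielding the statement at step $i$.

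Next, using the base-$p$ expansion $n=\sum_{i=0}^k n_i p^i$ together with the factorization $(1+X)^n=\prod_{i=0}^k\bigl((1+X)^{p^i}\bigr)^{n_i}$, and applying the usual binomial theorem to each factor (which is legitimate since $0\leq n_i<p$), I would obtain
$$
(1+X)^n ~\equiv~ \prod_{i=0}^k (1+X^{p^i})^{n_i} ~=~ \prod_{i=0}^k \sum_{m_i=0}^{n_i}{n_i\choose m_i}X^{m_i p^i}\modp.
$$
Expanding the last product yields a sum of monomials of the form $\prod_{i=0}^k{n_i\choose m_i}\,X^{\sum_{i} m_i p^i}$ indexed by tuples $(m_0,\ldots,m_k)$ with $m_i\in\{0,\ldots,n_i\}\subseteq\{0,\ldots,p-1\}$. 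By the uniqueness of base-$p$ expansions, distinct tuples produce distinct monomials; therefore the coefficient of $X^m$ on the right-hand side equals $\prod_{i=0}^k{n_i\choose m_i}$ whenever $m=\sum_{i=0}^k m_i p^i$ is the base-$p$ expansion of $m$ with each $m_i\leq n_i$, and equals $0$ otherwise (in agreement with the stated convention $\binom{a}{b}=0$ for $0\leq a<b$). Comparing with the standard expansion $(1+X)^n=\sum_{m}{n\choose m}X^m$ then yields the claimed congruence.

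I do not anticipate a serious obstacle here; the only slightly delicate point is the bookkeeping that identifies the tuple $(m_0,\ldots,m_k)$ arising in the expansion with the base-$p$ digits of $m$, which is a direct consequence of the inequality $n_i<p$. All the remaining ingredients—the freshman's dream, the uniqueness of base-$p$ expansions, and the ordinary binomial theorem for exponents smaller than $p$—are entirely elementary.
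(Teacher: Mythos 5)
Your argument is correct: it is the standard generating-function proof of Lucas' theorem (compare coefficients of $X^m$ in $(1+X)^n\equiv\prod_i(1+X^{p^i})^{n_i}$, using the freshman's dream and uniqueness of base-$p$ digits). Note, however, that the paper does not prove this statement at all --- it quotes Lucas' theorem as a classical result with references to Lucas (1878) and Fine (1947) --- so there is no proof in the paper to compare against. One small point of hygiene: since the congruence is a statement about integer binomial coefficients, the computation should be carried out in $\mathbb{Z}[X]$ reduced modulo $p$ (equivalently in $(\mathbb{Z}/p\mathbb{Z})[X]$) rather than in $\calR[X]$; the ring $\calR$ of the paper may have characteristic $0$ or a different prime, in which case ``working modulo $p$ inside $\calR[X]$'' is not meaningful. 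With that substitution the proof is complete and entirely elementary.
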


\begin{corollary}\label{cor:Sing}
For any integer $n>1$ and any prime $p$, the following two conditions are equivalent.
\begin{enumerate}
\item[(i)] $n\in s_1(p)$.
\item[(ii)] $p$ divides ${n\choose m}$ for any integer $m$ such that $0<m<n$.
\end{enumerate}
Moreover, if $\chR$ is a prime $p$, then any of these conditions holds iff $(x+y)^n=x^n+y^n$ for any $x,y\in\calR$.
\end{corollary}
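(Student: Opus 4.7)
The plan is to establish the equivalence of (i) and (ii) by direct application of Lucas' theorem, and then to obtain the equivalence with the identity $(x+y)^n=x^n+y^n$ from the binomial expansion together with the fact that polynomial functions over the infinite integral domain $\calR$ are determined by their coefficients.

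For (i) $\Rightarrow$ (ii), I would assume $n=p^k$, so that the base-$p$ expansion of $n$ has a single nonzero digit $n_k=1$. For $0<m<n$ with base-$p$ expansion $m=\sum_{i=0}^k m_i{\,}p^i$, the bound $m<p^k$ forces $m_k=0$, while $m>0$ forces $m_j\geq 1$ for some $j<k$. Since $n_j=0$ at this index, Lucas' theorem gives
$$
{n\choose m} ~\equiv ~{0\choose m_j}\prod_{i\neq j}{n_i\choose m_i}~\equiv ~0 \modp .
$$

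For (ii) $\Rightarrow$ (i), I would argue by contrapositive. Suppose $n\notin s_1(p)$; then the base-$p$ digits of $n$ satisfy $\sum_i n_i\geq 2$. Let $j$ be the smallest index with $n_j\geq 1$ and take $m=p^j$. By Lucas' theorem, ${n\choose m}\equiv n_j \modp$, which is nonzero since $1\leq n_j\leq p-1$. To check $m<n$, I would split on the shape of the digits: either some digit satisfies $n_i\geq 2$, in which case $n\geq 2p^j>m$, or at least two digits equal $1$, so that $n\geq p^j+p^{j'}>p^j=m$ for some $j'>j$. In either case $m$ witnesses the failure of (ii).

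For the last claim, suppose $\chR=p$. The binomial expansion
$$
(x+y)^n ~=~ \sum_{m=0}^n {n\choose m}{\,}x^{n-m}y^m
$$
holds in $\calR[x,y]$, and since $\calR$ is infinite the identity $(x+y)^n=x^n+y^n$ as a function on $\calR\times\calR$ is equivalent to the vanishing in $\calR$ of every coefficient ${n\choose m}$ with $0<m<n$; in characteristic $p$ this amounts exactly to $p$ dividing each such ${n\choose m}$, which is condition (ii). The only delicate point I anticipate is the case analysis verifying $m<n$ in the contrapositive; beyond that, the proof is a mechanical exercise in reading off base-$p$ digits through Lucas' theorem.
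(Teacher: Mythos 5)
Your proof is correct and follows essentially the same route as the paper: both directions are read off directly from Lucas' theorem, and the functional identity is reduced to the vanishing in $\calR$ of the middle binomial coefficients. The only (immaterial) difference is that for (ii) $\Rightarrow$ (i) you exhibit the witness $m=p^j$ with $j$ the smallest nonzero digit, which forces your small case analysis to verify $m<n$, whereas the paper takes $m=n-p^j$, for which $0<m<n$ is immediate.
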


\begin{proof}
(i) $\Rightarrow$ (ii). This implication immediately follows from Lucas' theorem.

(ii) $\Rightarrow$ (i). We prove this implication by contradiction. Suppose $n\notin s_1(p)$. Let $n=\sum_{i=0}^{k}n_ip^i$ be the base $p$ expansion of $n$, let $j\in\{0,\ldots,k\}$ such that $n_j\neq 0$, and let $m=n-p^j$. Then we have $0<m<n$ and by Lucas' theorem we also have ${n\choose m}\equiv n_j\modp$. This means that $p$ does not divide ${n\choose m}$, which is a contradiction.

\noindent The second part of the corollary is straightforward.
\end{proof}

\begin{corollary}\label{cor:Sing2}
Let $n>1$ be an integer and let $p$ be a prime.
\begin{enumerate}
\item[(a)] If $n=n_1+n_2\in s_2(p)$ for some $n_1,n_2\in s_1(p)$ (with $n_1,n_2$ distinct if $p=2$), then
    \begin{itemize}
    \item $p$ divides ${n\choose m}$ for any integer $m\in\{1,\ldots,n-1\}\setminus\{n_1,n_2\}$.
    \item ${n\choose n_1}\equiv {n\choose n_2}\equiv (1+\delta_{n_1,n_2})\modp$.
    \end{itemize}
\item[(b)] If $p$ divides ${n\choose m}{m\choose\ell}$ for any integers $\ell,m$ such that $0<\ell <m<n$, then $n\in s_1(p)\cup s_2(p)$.
\end{enumerate}
\end{corollary}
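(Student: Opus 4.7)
Both parts should follow from Lucas' theorem combined with Corollary~\ref{cor:Sing}, so my plan is to translate each claim into base-$p$ digit language and carry out a short case analysis.

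For part (a), I would write $n_1 = p^a$ and $n_2 = p^b$ and split on whether $a = b$. If $a \neq b$, the base-$p$ expansion of $n = p^a + p^b$ has a $1$ at positions $a$ and $b$ and $0$ elsewhere, so Lucas' theorem forces ${n\choose m} \not\equiv 0 \modp$ only when the digits of $m$ sit inside $\{a,b\}$ with values in $\{0,1\}$. The only intermediate values of $m$ are then $p^a$ and $p^b$, each yielding ${n\choose m} \equiv 1 \modp$, matching $1+\delta_{n_1,n_2}=1$. If instead $a = b$ (which forces $p \geq 3$ under the hypothesis), then $n = 2p^a$ has a single digit $2$ at position $a$ and Lucas gives ${n\choose p^a} \equiv {2\choose 1} = 2 \equiv 1+\delta_{n_1,n_2} \modp$, while every other intermediate $m$ has a binomial divisible by $p$.

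For part (b), I would argue by contrapositive. Assuming $n \notin s_1(p) \cup s_2(p)$, the digit sum of $n$ in base $p$ is at least $3$ (since $n > 1$ rules out digit sum $0$). The crucial step is to exhibit some $m \in s_2(p)$ with $0 < m < n$ whose base-$p$ digits are all at most the corresponding digits of $n$, so that Lucas yields ${n\choose m} \not\equiv 0 \modp$. If $n$ has nonzero digits at two distinct positions $i < j$, I would take $m = p^i + p^j$; otherwise $n$ has a unique nonzero digit $n_i \geq 3$, which forces $p \geq 3$, and I would take $m = 2p^i$. Since $m$ has digit sum $2$, in particular $m \notin s_1(p)$, so Corollary~\ref{cor:Sing} supplies an $\ell$ with $0 < \ell < m$ and $p \nmid {m\choose \ell}$; then $p \nmid {n\choose m}{m\choose \ell}$, contradicting the hypothesis.

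The main subtleties are really just bookkeeping. In part (a) one should check that the excluded case $n_1 = n_2$ with $p = 2$ cannot re-enter through the back door; it does not, because $2p^a = p^{a+1} \in s_1(2)$ lies outside $s_2(2)$. In part (b) one should check that the single-digit sub-case of the construction never triggers for $p = 2$, which holds because a base-$2$ digit cannot reach $3$. Neither point is a genuine obstacle, and once they are flagged both parts unwind immediately from Lucas' theorem.
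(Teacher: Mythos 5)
Your proof is correct and follows essentially the same route as the paper: part (a) is the digit-by-digit application of Lucas' theorem that the paper leaves as ``straightforward,'' and part (b) has the same skeleton (produce an intermediate $m\notin s_1(p)$ with ${n\choose m}\not\equiv 0\modp$, then invoke Corollary~\ref{cor:Sing} to get $\ell$). The only difference is the witness: the paper takes $m=n-p^j$ for a nonzero digit $n_j$, whereas you build $m\in s_2(p)$ from below with digits dominated by those of $n$; both choices work and neither is meaningfully simpler.
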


\begin{proof}
Assertion (a) is a straightforward consequence of Lucas' theorem. To show that assertion (b) holds, we first proceed as in the proof of the implication (ii) $\Rightarrow$ (i) of Corollary~\ref{cor:Sing}. Suppose $n\notin s_1(p)\cup s_2(p)$. Let $n=\sum_{i=0}^{k}n_ip^i$ be the base $p$ expansion of $n$, let $j\in\{0,\ldots,k\}$ such that $n_j\neq 0$, and let $m=n-p^j$. Then we have $0<m<n$ and ${n\choose m}\not\equiv 0\modp$. Since $n\notin s_2(p)$ we must have $m\notin s_1(p)$ and we conclude the proof by applying Corollary~\ref{cor:Sing}.
\end{proof}

We now prove the Main Theorem. Let $P\colon\calR^2\to\calR$ be a polynomial function satisfying Jacobi's identity \eqref{eq:Jacobi}, that is, such that $J_P=0$.

Suppose first that $\deg_2(P)=0$, that is, $P(x,y)=P(x)$. Using Jacobi's identity, we obtain that $P(P(x))$ is a constant, and hence $P$ is a constant $C$ satisfying $3C=0$. Therefore, $C$ can be any constant if $\chR=3$, and $C=0$, otherwise. Thus, we shall henceforth assume that $\deg_2(P)\geq 1$.

\begin{proposition}\label{prop:d1lt1}
If $P\colon\calR^2\to\calR$ is a polynomial function satisfying $J_P=0$ and $\deg_2(P)\geq 1$, then $\deg_1(P)\leq 1$.
\end{proposition}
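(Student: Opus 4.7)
Let $a=\deg_1(P)$ and $b=\deg_2(P)\geq 1$, and suppose for a contradiction that $a\geq 2$. Write $P(x,y)=\sum_{i=0}^a\alpha_i(y)\,x^i=\sum_{j=0}^b\beta_j(x)\,y^j$ with $\alpha_a,\beta_b\neq 0$. A direct computation gives
\[
\deg_x P(P(x,y),z)=a^2,\quad \deg_x P(P(y,z),x)=b,\quad \deg_x P(P(z,x),y)=ab,
\]
so the argument splits according to the comparison of $a$ and $b$. If $a>b$, then $a^2$ strictly exceeds both $b$ and $ab$, so $[x^{a^2}]J_P$ receives only the first contribution, which equals $\alpha_a(y)^a\alpha_a(z)$. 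This is a nonzero element of the integral domain $\calR[y,z]$, contradicting $J_P=0$. The case $a<b$ is symmetric: $ab$ strictly dominates, so $[x^{ab}]J_P=\beta_b(z)^a\alpha_a(y)\neq 0$.

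The remaining case $a=b=d\geq 2$ is the crux. Now $a^2=ab=d^2$, and both the first and third summands contribute, producing the coefficient equation
\[
\alpha_d(y)^d\alpha_d(z)+\beta_d(z)^d\alpha_d(y) \;=\; 0.
\]
Cancelling the nonzero factor $\alpha_d(y)$ in $\calR[y,z]$ leaves $\alpha_d(y)^{d-1}\alpha_d(z)=-\beta_d(z)^d$; the right-hand side lies in $\calR[z]$ while $d-1\geq 1$, so integrality forces $\alpha_d(y)$ to be a constant $c$, after which $\beta_d(z)^d=-c^d$ forces $\beta_d=c'$ to be a constant with $c^d+c'^d=0$ and $c,c'\neq 0$. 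Hence $P(x,y)=cx^d+c'y^d+Q(x,y)$ with $\deg_1Q,\deg_2Q<d$.

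Closing this balanced case is the main obstacle. The plan is to iterate the coefficient analysis in $x$. Extracting $[x^{d^2-1}]J_P$ already yields the identity $d\,c^d\,\alpha_{d-1}(y)+d\,c\,{c'}^{d-1}\,\beta_{d-1}(z)=0$, which in characteristic not dividing $d$ forces $\alpha_{d-1}$ and $\beta_{d-1}$ themselves to be constants related by a scalar equation. Continuing to $[x^{d^2-k}]J_P$ for higher $k$, and in positive characteristic invoking Lucas's theorem (Corollary~\ref{cor:Sing}) to identify precisely which binomials $\binom{d}{k}$ appearing in the expansion of $P(x,y)^d$ vanish modulo $\chR$, one expects to propagate constancy and vanishing to enough lower coefficients of $P$ that the symmetric residue $c'(x^d+y^d+z^d)$, contributed at total degree $d$ in $J_P$ by the inner outputs $c'z^d$, $c'x^d$, $c'y^d$, can no longer be absorbed by the remaining cross-terms; this would force $c'=0$, contradicting $c'^d=-c^d\neq 0$.
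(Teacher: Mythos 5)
Your opening moves are correct and coincide with the start of the paper's argument (Claim~\ref{claim:1}): comparing $\deg_1(P)=a$ with $\deg_2(P)=b$ via the coefficient of $x^{a^2}$ (resp.\ $x^{ab}$) rules out $a\neq b$, and in the balanced case $a=b=d$ the identity $\alpha_d(y)^{d-1}\alpha_d(z)+\beta_d(z)^d=0$ does force $\alpha_d=c$ and $\beta_d=c'$ to be nonzero constants with $c^d+c'^d=0$. Your computation of $[x^{d^2-1}]J_P$ is also correct as far as it goes.

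However, from that point on you have a plan, not a proof, and this is where essentially all of the difficulty of the proposition lives. The paper needs the remainder of Claim~\ref{claim:1} (that the \emph{total} degree equals $d$ and $P_d(x,y)^d=c_{d,d}^d(x^{d^2}-y^{d^2})$, which you never establish), Claim~\ref{claim:2} (that $\chR$ is a prime $p$ with $d\in s_1(p)$ and $P_d(x,y)=c_{d,d}(x^d-y^d)$), and then the long induction of Claims~\ref{claim:4}--\ref{claim:7}, which classifies every homogeneous component $P_k$ ($1<k\leq d$) into types $0$, $1$, $2$ using Lucas' theorem and the sets $s_1(p)$, $s_2(p)$, and then eliminates type $2$. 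Your proposed propagation scheme is not only unexecuted but aims at the wrong target: the paper's contradiction does \emph{not} come from forcing $c'=0$ (indeed $c_{d,0}=-c_{d,d}\neq 0$ survives the entire analysis); it comes only at the very end, from the degree-one coefficients, where \eqref{eq:comp1} gives $c_{1,1}+c_{1,0}=1$ while \eqref{eq:comp2} gives $c_{1,1}(c_{1,1}+c_{1,0})+c_{1,0}=0$, an inconsistent system. In positive characteristic the "symmetric residue'' you hope cannot be absorbed is in fact absorbed by genuine intermediate components (this is exactly why types $1$ and $2$ must be admitted and then painstakingly controlled), so the sketched endgame would fail as stated. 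The gap is therefore the entire balanced case for $d\geq 2$, which is the substance of the proposition.
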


We prove Proposition~\ref{prop:d1lt1} by contradiction. Thus we suppose that $\deg_1(P)=d\geq 2$.

\begin{claim}\label{claim:1}
We have $\deg_2(P)=\deg(P)=d$. Moreover, the polynomial function $P$ is of the form
\begin{equation}\label{eq:4s6d54}
P(x,y) ~=~ \sum_{k=0}^d\sum_{j=0}^k c_{k,j}{\,}x^jy^{k-j}
\end{equation}
with $c_{d,d}{\,}c_{d,0}\neq 0$, $c_{d,d}^d+c_{d,0}^d=0$, and
\begin{equation}\label{eq:prop5422}
P_d(x,y)^d ~=~ c_{d,d}^d(x^{d^2}-y^{d^2}).
\end{equation}
\end{claim}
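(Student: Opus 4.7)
The plan is to extract the asserted structural information from $J_P=0$ by examining coefficients of well-chosen high-degree monomials. Throughout, I write $\alpha(y)=[x^d]P(x,y)\in\calR[y]$ and $\gamma(z)=[x^d]P(z,x)\in\calR[z]$, both nonzero in the integral domain (the first since $\deg_1(P)=d$, the second once we know $\deg_2(P)=d$).

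First I would show $\deg_2(P)=d$ by a comparison of $x$-degrees. Setting $e=\deg_2(P)$, the three summands of $J_P$ have $x$-degrees $d^2$, $e$, and $de$ respectively. If $d>e$, the maximum $d^2$ is attained only by $P(P(x,y),z)$, whose $x^{d^2}$-coefficient is $\alpha(y)^d\alpha(z)\neq 0$ in $\calR[y,z]$, contradicting $J_P=0$. The case $e>d$ is analogous via $P(P(z,x),y)$. Hence $e=d$.

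Next, with $d=e$, reading off $[x^{d^2}]J_P=0$ yields $\alpha(y)^d\alpha(z)+\gamma(z)^d\alpha(y)=0$ (the middle summand contributes nothing since its $x$-degree is $d<d^2$). Factoring $\alpha(y)$ and using $\alpha\neq 0$ gives $\alpha(y)^{d-1}\alpha(z)=-\gamma(z)^d$ in $\calR[y,z]$. The right-hand side is free of $y$, but in the integral domain $\calR[y,z]$ the polynomial $\alpha(y)^{d-1}$ has positive $y$-degree whenever $\alpha$ is nonconstant (since $d-1\geq 1$), so $\alpha(y)=A\in\calR$ is a constant. Substituting, $\gamma(z)^d=-A^d$ in $\calR[z]$ forces $\gamma(z)=B\in\calR$ constant by the same degree argument. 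This gives $A^d+B^d=0$ with $A,B\neq 0$, and translates into the vanishing of every coefficient of a monomial $x^d y^m$ with $m\geq 1$ or $x^j y^d$ with $j\geq 1$ in $P$, together with $A=c_{d,d}$ and $B=c_{d,0}$ in the notation of~\eqref{eq:4s6d54}.

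Finally, I would pass to the top total degree. Let $D=\deg(P)\geq d$. Writing $P(u,v)=\sum_{j,m}c_{j,m}u^jv^m$ and using the vanishings of the previous paragraph, the term $c_{j,m}P(u,v)^j w^m$ has total degree at most $jD+m$, and among nonzero $c_{j,m}$ this quantity is uniquely maximized at $(j,m)=(d,0)$, yielding $dD$. Hence the total-degree-$dD$ component of $J_P=0$ reads $A[P_D(x,y)^d+P_D(y,z)^d+P_D(z,x)^d]=0$, so the cyclic sum $P_D(x,y)^d+P_D(y,z)^d+P_D(z,x)^d$ vanishes. For each monomial $x^ay^b$ with $a,b>0$ and $a+b=dD$, only the first cyclic summand can contribute, so $[x^ay^b]P_D(x,y)^d=0$; and $[x^{dD}]P_D^d=([x^D]P_D)^d$ and $[y^{dD}]P_D^d=([y^D]P_D)^d$ both vanish if $D>d$ because $\deg_1(P)=\deg_2(P)=d$. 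This forces $P_D^d=0$, and hence $P_D=0$ in the integral domain $\calR[x,y]$, contradicting $D=\deg(P)$. Therefore $D=d$, and the same coefficient analysis at degree $d^2$ together with $[x^{d^2}]P_d^d=A^d$ and $[y^{d^2}]P_d^d=B^d$ yields $P_d(x,y)^d=A^d x^{d^2}+B^d y^{d^2}=A^d(x^{d^2}-y^{d^2})$, using $B^d=-A^d$. The main delicate point is this last step: the vanishings from the previous paragraph are essential to identify the top-total-degree contribution of each summand of $J_P$ cleanly as $A\,P_D(\cdot,\cdot)^d$, so that the resulting cyclic identity is tight enough to force $P_D=0$ when $D>d$.
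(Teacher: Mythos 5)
Your proof is correct and follows essentially the same route as the paper: the same comparison of $x$-degrees gives $\deg_2(P)=d$, the coefficient of $x^{d^2}$ yields the relation forcing $\alpha$ and $\gamma$ (the paper's $R_d$ and $S_d$) to be nonzero constants with $c_{d,d}^d+c_{d,0}^d=0$, and the top-total-degree part of $J_P$ rules out $\deg(P)>d$ and gives \eqref{eq:prop5422}. The only cosmetic difference is that you package the last step as the cyclic identity $P_D(x,y)^d+P_D(y,z)^d+P_D(z,x)^d=0$ before reading off coefficients, whereas the paper extracts the coefficients of $z$-free monomials of degree $rd$ from each of the three summands directly.
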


\begin{proof}[Proof of Claim~\ref{claim:1}]
In this proof we use the notation $[x^k]_xJ_P(x,y,z)$ to denote the coefficient of $x^k$ in the expansion of $J_P$ in powers of $x$.

Set $d_2=\deg_2(P)\geq 1$. Then there exist polynomial functions $R_j\colon\calR\to\calR$ ($j=0,\ldots,d$) and $S_k\colon\calR\to\calR$ ($k=0,\ldots,d_2$), with $R_{d}\neq 0$ and $S_{d_2}\neq 0$, such that
$$
P(x,y) ~=~ \sum_{j=0}^{d}x^j R_j(y) ~=~ \sum_{k=0}^{d_2}y^k S_k(x).
$$
We then have
$$
J_P(x,y,z) ~=~ \sum_{j=0}^{d}\Big(\sum_{k=0}^{d}x^kR_k(y)\Big)^jR_j(z)+\sum_{k=0}^{d_2}x^kS_k(P(y,z))
+\sum_{j=0}^{d}\Big(\sum_{k=0}^{d_2}x^kS_k(z)\Big)^jR_j(y).
$$

Now, if $d>d_2$, then
$$
[x^{d^2}]_xJ_P(x,y,z) ~=~ R_{d}(y)^{d}{\,}R_{d}(z)
$$
from which we derive $R_{d}=0$, a contradiction. Similarly, if $d<d_2$, then
$$
[x^{d{\,}d_2}]_xJ_P(x,y,z) ~=~ S_{d_2}(z)^{d}{\,}R_{d}(y)
$$
and hence we obtain $R_{d}=0$ or $S_{d_2}=0$, again a contradiction. Thus we have proved that $d=d_2$. It then follows that
$$
[x^{d^2}]_xJ_P(x,y,z) ~=~ R_d(y)\big(R_d(y)^{d-1}R_d(z)+S_d(z)^d\big)
$$
and hence
\begin{equation}\label{eq:4sd56y}
R_d(y)^{d-1}R_d(z)+S_d(z)^d ~=~ 0.
\end{equation}
Since $d\geq 2$, from identity \eqref{eq:4sd56y} it follows that both $R_d$ and $S_d$ are nonzero constant polynomial functions. Thus the polynomial function $P$ is of the form
$$
P(x,y) ~=~ \sum_{j=0}^d\sum_{k=0}^d p_{j,k}{\,}x^jy^k,
$$
with $p_{d,0}{\,}p_{0,d}\neq 0$ and $p_{d,k}=p_{j,d}=0$ for $j,k=1,\ldots,d$. Identity \eqref{eq:4sd56y} also implies $p_{d,0}^d+p_{0,d}^d=0$.

Now, let $r=\deg(P)\geq d$ and let $M$ be an arbitrary monomial of $J_P$ of degree $rd$ in $(x,y)$ and degree $0$ in $z$ (e.g., $M=x^iy^{rd-i}$ for some $i\in\{0,\ldots,rd\}$). We then have
$$
[M]{\,}P(P(x,y),z) ~=~ [M]{\,}\sum_{j=0}^d p_{j,0}{\,}P(x,y)^j ~=~ [M]{\,}p_{d,0}{\,}P_r(x,y)^d
$$
and
\begin{eqnarray*}
\lefteqn{[M]{\,} P(P(y,z),x) ~=~ [M]{\,}\sum_{j=0}^d\sum_{k=0}^d p_{j,k}{\,}P(y,0)^jx^k}\\
&=& [M]{\,}\Big(p_{d,0}P(y,0)^d+p_{0,d}{\,}x^d+\sum_{j=0}^{d-1}\sum_{k=0}^{d-1} p_{j,k}{\,}P(y,0)^jx^k\Big) ~=~ [M]{\,}\delta_{r,d}{\,}p_{d,0}^{d+1}{\,}y^{d^2}
\end{eqnarray*}
Indeed, for $j,k=0,\ldots,d-1$, $P(y,0)^jx^k$ is of degree $jd+k\leq (d-1)d+(d-1)<d^2\leq rd$.

We show similarly that
$$
[M]{\,} P(P(z,x),y) ~=~ [M]{\,}\sum_{j=0}^d\sum_{k=0}^d p_{j,k}{\,}P(0,x)^jy^k ~=~ [M]{\,}\delta_{r,d}{\,}p_{d,0}{\,}p_{0,d}^d{\,}x^{d^2}.
$$

Let us now show by contradiction that $\deg(P)=d$. Suppose that $r>d$. By combining the latter three identities with $J_P=0$ we immediately obtain $0=[M]{\,}J_P(x,y,z)=[M]{\,}p_{d,0}{\,}P_r(x,y)^d$, a contradiction. We then have $\deg(P)=d$. Using the same three identities for $r=d$, we obtain
$$
P_d(x,y)^d+p_{d,0}^d{\,}y^{d^2}+p_{0,d}^d{\,}x^{d^2} = 0.
$$
Finally, since $\deg(P)=d$, the polynomial function $P$ must be of the form \eqref{eq:4s6d54}, with $c_{d,d}=p_{d,0}$ and $c_{d,0}=p_{0,d}$. Therefore the identities $c_{d,d}{\,}c_{d,0}\neq 0$, $c_{d,d}^d+c_{d,0}^d=0$, and \eqref{eq:prop5422} hold.
\end{proof}

We now show that $\chR$ must be a prime number. This shows that a contradiction is already reached if $\chR =0$, which then proves Proposition~\ref{prop:d1lt1} in this case.

\begin{claim}\label{claim:2}
The characteristic of $\calR$ is a prime $p$ and we have $d\in s_1(p)$. Moreover, we have $P_d(x,y)=c_{d,d}(x^d-y^d)$ and $(x+y)^d=x^d+y^d$ for any $x,y\in\calR$.
\end{claim}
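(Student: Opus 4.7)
The plan is to exploit the identity $P_d(x,y)^d = c_{d,d}^d(x^{d^2}-y^{d^2})$ from Claim~\ref{claim:1} and carry out a unique-factorization analysis in $F[x]$, where $F$ is the field of fractions of $\calR$. Specializing $y = 1$ and setting $Q(x) = P_d(x,1)\in\calR[x]\subset F[x]$, the identity becomes $Q(x)^d = c_{d,d}^d(x^{d^2}-1)$. Since $c_{d,d}^d$ is a unit in $F$, every irreducible factor of $x^{d^2}-1$ in the UFD $F[x]$ must appear with multiplicity divisible by $d$.

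If $\chR = 0$, then the derivative $d^2 x^{d^2-1}$ of $x^{d^2}-1$ is coprime to it, so $x^{d^2}-1$ is squarefree in $F[x]$; its irreducible factors then have multiplicity $1$, which is incompatible with being divisible by $d\geq 2$. Since $\calR$ is an integral domain, the characteristic must therefore be a prime $p$. Now write $d = p^k m$ with $\gcd(m,p) = 1$: the freshman's dream gives $x^{d^2}-1 = (x^{m^2}-1)^{p^{2k}}$, and $x^{m^2}-1$ is squarefree in $F[x]$ since its derivative $m^2 x^{m^2-1}$ is nonzero in characteristic $p$ and coprime to it. Hence every irreducible factor of $x^{d^2}-1$ has multiplicity exactly $p^{2k}$, and divisibility by $d = p^k m$ forces $p^k m \mid p^{2k}$, whence $m = 1$ and $d = p^k \in s_1(p)$.

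With $d = p^k$, the freshman's dream yields $x^{d^2}-1 = (x-1)^{d^2}$, so $Q(x)^d = (c_{d,d}(x-1)^d)^d$. Unique factorization in $F[x]$ gives $Q(x) = \zeta\, c_{d,d}(x-1)^d$ for some $\zeta\in F$ with $\zeta^d = 1$, and comparing leading coefficients (both equal to $c_{d,d}$) forces $\zeta = 1$. Hence $Q(x) = c_{d,d}(x^d-1)$, and homogeneity of $P_d$ gives $P_d(x,y) = y^d Q(x/y) = c_{d,d}(x^d-y^d)$. The identity $(x+y)^d = x^d+y^d$ on $\calR$ then follows from $d\in s_1(p)$ via Corollary~\ref{cor:Sing}. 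The main obstacle is the multiplicity analysis in positive characteristic; once the non-separable factorization $x^{d^2}-1 = (x^{m^2}-1)^{p^{2k}}$ is isolated, comparing the uniform multiplicity $p^{2k}$ against the requirement of divisibility by $d = p^k m$ is what forces $m=1$.
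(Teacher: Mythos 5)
Your proof is correct, and it takes a genuinely different route from the paper's. Both arguments start from identity \eqref{eq:prop5422} of Claim~\ref{claim:1}, but the paper never leaves $\calR[x,y]$: writing $P_d(x,y)=c_{d,d}{\,}x^d+\sum_{j=0}^r c_{d,j}{\,}x^jy^{d-j}$ with $c_{d,r}\neq 0$, it isolates the monomial of highest degree in $x$ in the expansion of $P_d(x,y)^d$, whose coefficient $d{\,}c_{d,d}^{d-1}c_{d,r}$ must vanish (giving $\chR=p$ with $p\mid d$), then repeats the argument with the largest non-vanishing binomial coefficient ${d\choose m}$, controlled by Lucas' theorem, to force $d\in s_1(p)$, and finally invokes the freshman's dream to kill the middle coefficients one by one. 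You instead dehomogenize to $Q(x)=P_d(x,1)$, pass to the fraction field $F$, and compare multiplicities of irreducible factors in the UFD $F[x]$: the derivative criterion shows $x^{d^2}-1$ (resp.\ $x^{m^2}-1$ after splitting off the inseparable part) is squarefree, so the uniform multiplicity $p^{2k}$ must be divisible by $d=p^km$, which forces $m=1$. Your route avoids Lucas' theorem entirely for this claim and delivers $P_d(x,y)=c_{d,d}(x^d-y^d)$ essentially for free from unique factorization of $Q(x)^d=\big(c_{d,d}(x-1)^d\big)^d$ (the $d$-th root of unity $\zeta$ is necessarily a constant and is pinned to $1$ by the leading coefficient), whereas the paper's coefficient-extraction and Lucas machinery is set up here because it is reused heavily in Claims~\ref{claim:4}--\ref{claim:7}. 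The only stylistic cost of your argument is the passage to $F[x]$ and $F(x)$, which is harmless since $\calR$ is an integral domain.
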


\begin{proof}[Proof of Claim~\ref{claim:2}]
By Claim~\ref{claim:1} we have $\deg(P)=d$ and $[y^d]P(x,y)=c_{d,0}\neq 0$. Then we have
\begin{equation}\label{eq:g4g88}
P_d(x,y) ~=~ c_{d,d}{\,}x^d +\sum_{j=0}^r c_{d,j}{\,}x^jy^{d-j}
\end{equation}
for some integer $0\leq r\leq d-1$, with $c_{d,r}\neq 0$. Equation \eqref{eq:prop5422} then becomes
\begin{equation}\label{eq:pDchp}
c_{d,d}^d{\,}y^{d^2}+\sum_{k=0}^{d-1}{d\choose k}(c_{d,d}{\,}x^d)^k\Big(\sum_{j=0}^r c_{d,j}{\,}x^jy^{d-j}\Big)^{d-k} ~=~ 0.
\end{equation}
Clearly, the literal part of the monomial of highest degree in $x$ in the left-hand side of \eqref{eq:pDchp} is $x^{d(d-1)+r}y^{d-r}$. Indeed, it corresponds to the values $k=d-1$ and $j=r$ in the sums and therefore has the coefficient $d{\,}c_{d,d}^{d-1}c_{d,r}$. Since $c_{d,d}{\,}c_{d,r}\neq 0$, we must have $d{\,}1=0$ (here the symbol $1$ denotes the identity of $\calR$). If follows that the characteristic of $\calR$ should be a prime $p\geq 2$ that divides $d$.

We now show by contradiction that $d\in s_1(p)$. Suppose that $d\notin s_1(p)$. Then by Corollary~\ref{cor:Sing} we can let $m$ be the greatest $k\in\{1,\ldots,d-1\}$ such that ${d\choose k}\not\equiv 0\modp$. Equation~\eqref{eq:pDchp} then reduces to
\begin{equation}\label{eq:pDchp2}
c_{d,d}^d{\,}y^{d^2}+\sum_{k=0}^{m}{d\choose k}(c_{d,d}{\,}x^d)^k\Big(\sum_{j=0}^r c_{d,j}{\,}x^jy^{d-j}\Big)^{d-k} ~=~ 0.
\end{equation}
The literal part of the monomial of highest degree in $x$ in the left-hand side of \eqref{eq:pDchp2} is $x^{md+r(d-m)}{\,}y^{(d-r)(d-m)}$. It corresponds to the values $k=m$ and $j=r$ and therefore has the coefficient ${d\choose m}{\,}c_{d,d}^{m}{\,}c_{d,r}^{d-m}\neq 0$, which leads to a contradiction. Therefore $d\in s_1(p)$ and hence by Corollary~\ref{cor:Sing} we have $(x+y)^d=x^d+y^d$ for any $x,y\in\calR$.

Now, by Claim~\ref{claim:1} we have $(c_{d,d}+c_{d,0})^d=c_{d,d}^d+c_{d,0}^d=0$ and hence $c_{d,d}+c_{d,0}=0$. By Corollary~\ref{cor:Sing} the identity \eqref{eq:pDchp} then reduces to
$$
c_{d,d}^d{\,}y^{d^2}+\sum_{j=0}^r c_{d,j}^d{\,}x^{dj}y^{d(d-j)} ~=~ 0{\,},
$$
which implies $r=0$. Using \eqref{eq:g4g88} we finally obtain $P_d(x,y)=c_{d,d}(x^d-y^d)$.
\end{proof}

\begin{remark}
From now on we will often make an implicit use of Fermat's little theorem: if $m\in s_1(p)$ then $a^m\equiv a\modp$ for every integer $a$.
\end{remark}

We will now show (through Claims~\ref{claim:4}--\ref{claim:7}) that for every integer $k$ such that $1<k\leq d$ the polynomial function $P_k$ is of one of the following three forms.
\begin{itemize}
\item\textbf{Type 0}: $P_k=0$.
\item\textbf{Type 1}: $P_k\neq 0$, $k\in s_1(p)$, and
$$
P_k(x,y) ~=~ c_{k,k}(x^k-y^k).
$$
\item\textbf{Type 2}: $P_k\neq 0$, $k=k_1+k_2\in s_2(p)$, with $k_1\geq k_2$ and $k_1,k_2\in s_1(p)$, and
$$
P_k(x,y) ~=~ c_{k,k}{\,}x^k+\frac{c_{k,k_1}}{1+\delta_{k_1,k_2}}{\,}(x^{k_1}y^{k_2}+x^{k_2}y^{k_1})+c_{k,0}{\,}y^k.
$$
Note: This latter form simply means that $c_{k,j}=0$ whenever $j\notin\{k,k_1,k_2,0\}$ and that $c_{k,k_1}=c_{k,k_2}$.
\end{itemize}

For every real $r\geq 0$ and every $m\in\{0,1,2\}$ we let
$$
\calS_{m,r} ~=~ \{k~\text{integer}\mid r<k\leq d~\text{and $P_k$ is of type $m$}\}.
$$
It is clear that the sets $\calS_{0,r}$, $\calS_{1,r}$, $\calS_{2,r}$ are pairwise disjoint. Moreover, if $r\leq r'\leq d$, then we have $\calS_{m,r}\supseteq\calS_{m,r'}\supseteq\calS_{m,d}=\varnothing$.

By Claim~\ref{claim:2} we have $d=\sup\calS_{1,1}$. Regarding $\calS_{2,1}$ we have two cases to consider.
\begin{itemize}
\item If $\calS_{2,1}=\varnothing$, then we set $r_0=1$.
\item If $\calS_{2,1}\neq\varnothing$, then we set $q=q_1+q_2=\sup\calS_{2,1}$, with $q_1\geq q_2$ and $q_1,q_2\in s_1(p)$. We also set $r_0=q_1+\frac{q_2q}{d}$. We then have $1\leq q_1<r_0<q<d$. Note that $r_0$ is an integer iff $d$ divides $q_2q=q_2^2(1+q_1/q_2)$. But $p$ does not divide $(1+q_1/q_2)$ since $q_1/q_2\in s_1(p)$. Hence $r_0$ is an integer iff $d$ divides $q_2^2$, or equivalently, iff $d\leq q_2^2$. In this case we also have $d\leq q_1q_2$ and hence $d$ divides $q_1q_2$.
\end{itemize}

Note that if $\calS_{2,r}\neq\varnothing$ for some $r\geq 1$, then clearly $\calS_{2,1}\neq\varnothing$ and $r<q$.

\begin{remark}
In all the equations that we will now consider, some expressions are associated with polynomial functions $P_k$ for which $k\in\calS_{2,1}$ (e.g., expressions involving $q$, $q_1$, and $q_2$). The proofs corresponding to those equations show that these expressions are to be ignored when $\calS_{2,1}=\varnothing$.
\end{remark}

For every real $r\geq 1$ we set
$$
\displaylines{%
\alpha_r ~=~ \sum_{\textstyle{k\in\calS_{1,r}\setminus\{d\}\atop\ell\in\calS_{1,r},{\,}k\ell =rd}}c_{k,k}{\,}c_{\ell,\ell}^k{\,},\cr
\beta_r ~=~ \sum_{k\in\calS_{2,r}}\sum_{\textstyle{a,b\in\calS_{1,r}\atop a,b>q,{\,}ak_1+bk_2=rd}}c_{k,k}{\,}c_{a,a}^{k_1}{\,}c_{b,b}^{k_2}~,\qquad\quad \gamma_r ~=~ c_{r,r}{\,}c_{d,d}^r+\beta_r.
}
$$
If $r$ is an integer, then we easily see that $\alpha_r=0$ if $r\notin s_1(p)$, and $\beta_r=0$ if $r\notin s_1(p)\cup s_2(p)$. If $r=r_0$ is not an integer, then $rd=q_1d+q_1q_2+q_2^2\in s_3(p)$ and hence $\alpha_r=\beta_r=0$. Since $c_{r,r}$ is to be ignored in this case, we also have $\gamma_r=0$.

The proofs of the following two claims (Claims~\ref{claim:4} and \ref{claim:5}) are rather technical. For this reason we relegate them to Appendix~\ref{app:A}.

\begin{claim}\label{claim:4}
Let $r\in\{\lceil r_0\rceil,\ldots,d-1\}$ be such that $\{r+1,\ldots,d\}\subseteq\bigcup_{m=0}^2\calS_{m,r}$. If $i,u$ are integers such that $1\leq i<u<r$, then
\begin{equation}\label{eq:ceq1}
(-1)^{u-i}{u\choose i}{\,}c_{r,u}c_{d,d}^u+\delta_{r,r_0}\delta_{u,\frac{qq_2}{d}}{\,}\chi_{\{\frac{q_1q_2}{d},\frac{q_2^2}{d}\}}(i){\,}c_{q,q_1}^{q_2+1} ~=~ 0.
\end{equation}
Here, $\chi_{\{j,k\}}(i)=\max\{\delta_{i,j},\delta_{i,k}\}$.
\end{claim}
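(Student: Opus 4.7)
The plan is to extract equation \eqref{eq:ceq1} by matching coefficients in the identity $J_P(x,y,z)=0$ for a carefully chosen monomial. Guided by the form of the target equation, I would take
$$
M \,=\, x^{r-u}\,y^{di}\,z^{d(u-i)},
$$
a monomial of total degree $r+u(d-1)$ whose exponents of $y$ and $z$ are both multiples of $d$. The structural hypothesis that $P_k$ is of type $0$, $1$, or $2$ for $k>r$, together with the description of $P_d$ obtained in Claim~\ref{claim:2}, will severely restrict which outer-and-inner combinations can contribute to $[M]J_P$.

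First, I would argue that among the three cyclic summands of $J_P$, only $P(P(y,z),x)$ can contribute to $[M]J_P$ when $1\le i<u<r$. Indeed, in $P(P(x,y),z)$, matching the $z$-exponent $d(u-i)\ge d$ of $M$ forces the outer polynomial to be $P_d$ with its $z^d$-monomial (i.e.\ $j=0$), leaving no way to produce the required $x^{r-u}y^{di}$ factor; the analogous argument rules out $P(P(z,x),y)$. Writing
\[
P(P(y,z),x)\,=\,\sum_s\sum_j c_{s,j}\,P(y,z)^j\,x^{s-j},
\]
the constraint $s-j=r-u$ gives $j=s-r+u$, and the condition that $P(y,z)^j$ contain a monomial of $(y,z)$-degree $du$ forces $j\ge u$, hence $s\ge r$.

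The principal term arises from $s=r$, $j=u$: the only way $P(y,z)^u$ can reach $(y,z)$-degree $du$ is from its top component $P_d(y,z)^u=c_{d,d}^u(y^d-z^d)^u$, whose binomial expansion contributes exactly $(-1)^{u-i}\binom{u}{i}c_{r,u}c_{d,d}^u$ to $[M]J_P$. For $s>r$ the hypothesis forces $P_s$ to be of type $0$, $1$, or $2$; types $0$ and $1$ are immediately ruled out since $j=s-r+u$ lies strictly between $0$ and $s$, and type $2$ can contribute only when $r-u$ is one of the summands $s_1$ or $s_2$ in the decomposition $s=s_1+s_2$.

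The delicate remaining step is to show that, under the assumption $r\ge\lceil r_0\rceil$, the only type $2$ contribution that survives comes from $s=q$ with $j=q_2$ (so $r-u=q_1$), and that the relevant piece of $P(y,z)^{q_2}$ is precisely $P_q(y,z)^{q_2}$ with all $q_2$ factors chosen to be the same side of the symmetric middle term, yielding the monomial $y^{q_1q_2}z^{q_2^2}$ (or, symmetrically, $y^{q_2^2}z^{q_1q_2}$) with coefficient $c_{q,q_1}^{q_2}$; combined with the outer $c_{q,q_2}=c_{q,q_1}$ this produces the exceptional term $c_{q,q_1}^{q_2+1}$ and explains the factors $\delta_{r,r_0}\delta_{u,qq_2/d}\chi_{\{q_1q_2/d,q_2^2/d\}}(i)$. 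The main obstacle is precisely this bookkeeping: one must rule out every other admissible $s\in\calS_{2,r}$ (using the maximality of $q$), every mixed multinomial configuration of $P(y,z)^j$ for $j>u$ that could accidentally produce the same trivariate monomial, and cross-contributions involving factors $P_k$ of intermediate degree. The precise choice $r_0=q_1+qq_2/d$ is calibrated so that for $r<r_0$ no such configuration fits, and at $r=r_0$ it matches exactly the constraints on $(u,i)$ imposed by the Kronecker deltas and the indicator $\chi$.
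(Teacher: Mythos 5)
Your setup coincides with the paper's up to a cyclic relabeling: the paper extracts the coefficient of $M=x^{di}y^{d(u-i)}z^{r-u}$ and analyzes $P(P(x,y),z)$, while you extract the coefficient of $x^{r-u}y^{di}z^{d(u-i)}$ and analyze $P(P(y,z),x)$; since $J_P$ is invariant under cyclic permutation of its arguments these are the same computation. Your elimination of the other two cyclic summands, your reduction to outer degrees $s\geq r$, and your identification of the principal term $(-1)^{u-i}\binom{u}{i}c_{r,u}c_{d,d}^u$ at $s=r$, $j=u$ are all correct and match the paper's proof.

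However, what you describe as ``the main obstacle'' and leave unresolved is precisely the mathematical content that makes the claim true, so the proposal has a genuine gap. The paper closes it with two ingredients you do not supply. First, the ``mixed multinomial configurations'' you worry about simply do not occur: for a type~$2$ outer component the relevant inner exponents $j\in\{k_1,k_2\}$ lie in $s_1(p)$, so the freshman's dream gives $P(y,z)^{j}=\sum_{\ell}P_{\ell}(y,z)^{j}$ exactly, and only the single homogeneous component $P_{du/j}$ can reach degree $du$. Second, the elimination of every other $k\in\calS_{2,r}$ and the isolation of the exceptional case rest on the preliminary conditions (a)--(e) established at the start of Appendix~\ref{app:A}: every $k=k_1+k_2\in\calS_{2,r}$ has $k_1=q_1$ and $k_2\leq q_2$, and the inequalities $d(r-k_1)\geq qk_2$ and $d(r-k_2)\geq qk_1$ hold, with equality exactly when $r=r_0$ and $k=q$ (and, for the second, $q_1=q_2$). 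These inequalities force the inner index $d(r-k_1)/k_2$ (resp.\ $d(r-k_2)/k_1$) to be $\geq q$, so that the inner component is of type $0$ or $1$ --- and its $j$-th power, again by the freshman's dream, contains no monomial involving both variables, whereas the target requires exponents $di\geq d$ and $d(u-i)\geq d$ --- except in the single equality case, which yields $c_{q,q_1}P_q(\cdot,\cdot)^{q_2}$ and hence the term $c_{q,q_1}^{q_2+1}$ with the factors $\delta_{r,r_0}\delta_{u,qq_2/d}\chi_{\{q_1q_2/d,\,q_2^2/d\}}(i)$. Proving these inequalities is exactly where the value $r_0=q_1+q_2q/d$ is justified; without them the Kronecker deltas in \eqref{eq:ceq1} cannot be derived, so the argument as written is an outline rather than a proof.
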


\begin{claim}\label{claim:5}
Let $r\in\{\lceil r_0\rceil,\ldots,d-1\}\cup\{r_0\}$ be such that $\{\lfloor r\rfloor+1,\ldots,d\}\subseteq\bigcup_{m=0}^2\calS_{m,r}$. Then the following two conditions hold.
\begin{itemize}
\item If either $r,u$ are integers such that $1\leq u<r$, or $r=r_0$ is not an integer and $u=\frac{q_2q}{d}$, then
\begin{eqnarray}
\lefteqn{c_{d,d}{\,}c_{r,u}^d+(-1)^{r-u}{r\choose u}\gamma_r}\nonumber\\
&& \null +\delta_{r,r_0}(1+\delta_{q_1,q_2})c_{q,q}{\,}[x^{du}y^{d(r-u)}](P_d(x,y)^{q_1}P_q(x,y)^{q_2})~=~0,\label{eq:ceq2}
\end{eqnarray}
where the first two summands are to be ignored when $r=r_0$ is not an integer.
\item If $r$ is an integer, then
\begin{eqnarray}
\lefteqn{c_{d,d}(c_{r,r}^d+c_{r,0}^d)+(1+(-1)^r)\gamma_r}\nonumber\\
&& \null +\delta_{r,r_0}(1+\delta_{q_1,q_2})c_{q,q_1}c_{d,d}^{q_1}(c_{q,q}-c_{q,0})^{q_2}-\delta_{r,1}c_{d,d}~=~0.\label{eq:ceq3}
\end{eqnarray}
\end{itemize}
\end{claim}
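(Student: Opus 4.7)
The plan is to derive both identities by extracting the coefficient of a suitable $z$-free monomial in the polynomial identity $J_P(x,y,z)=0$, in the spirit of the proof of Claim~\ref{claim:4}. The hypothesis classifies every $P_k$ with $\lfloor r\rfloor<k\le d$ as type 0, 1, or 2, which together with the freshman's dream (valid for $d\in s_1(p)$ by Claim~\ref{claim:2}) and the combinatorial control in Corollary~\ref{cor:Sing2} allows me to evaluate each contribution explicitly.

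For equation~\eqref{eq:ceq2} I would isolate the coefficient of $x^{du}y^{d(r-u)}$ in
\[
J_P(x,y,0)\ =\ P(P(x,y),0)+P(P(y,0),x)+P(P(0,x),y)\ =\ 0.
\]
Using the identity $P(A,0)=\sum_k c_{k,k}A^k$, the first summand equals $\sum_k c_{k,k}P(x,y)^k$, whose degree-$rd$ piece I would compute term by term. The $k=d$ term gives $c_{d,d}P_r^d=c_{d,d}\sum_j c_{r,j}^d\,x^{jd}y^{(r-j)d}$ (freshman's dream applied twice), producing the summand $c_{d,d}c_{r,u}^d$. The $k=r$ term has degree-$rd$ piece $c_{r,r}P_d^r=c_{r,r}c_{d,d}^r(x^d-y^d)^r$, producing $(-1)^{r-u}\binom{r}{u}c_{r,r}c_{d,d}^r$. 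For each $k\in\calS_{2,r}$ with $k=k_1+k_2$, I would use the factorisation $P^k=P^{k_1}P^{k_2}=\bigl(\sum_a P_a^{k_1}\bigr)\bigl(\sum_b P_b^{k_2}\bigr)$ and isolate the pairs $(a,b)$ with $ak_1+bk_2=rd$. Pairs with $a,b>q$ involve only type-1 factors; using $(x^a-y^a)^{k_1}=x^{ak_1}-y^{ak_1}$ (and its analogue in $b,k_2$), they combine into the $\beta_r$-contribution. The residual boundary pair $(a,b)=(d,q)$, together with its mirror $(q,d)$ when $q_1=q_2$, survives only when $dq_1+qq_2=rd$, i.e.\ $r=r_0$; it uses $P_q$ in its type-2 form and delivers the $\delta_{r,r_0}$-term. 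Finally, the remaining two summands $P(P(y,0),x)$ and $P(P(0,x),y)$ do not contribute for intermediate $u$: in $P(Q(y),x)$ with $Q(y)=P(y,0)$ the $x$-degree is at most $d$, forcing its $x^{du}$-coefficient to vanish for $u>1$; a symmetric bound handles the third summand, and direct inspection rules out the remaining edge cases.

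For equation~\eqref{eq:ceq3} I would perform the same analysis simultaneously at $u=0$ and $u=r$ and sum the two identities; the factors $(-1)^{r-u}\binom{r}{u}$ collapse into $1+(-1)^r$, producing the term $(1+(-1)^r)\gamma_r$. Now the two latter summands of $J_P(x,y,0)$ contribute non-trivially: at $u=0$ the first one reduces to the composition $Q(Q(y))$, whose $y^{rd}$-coefficient is determined by the classification of the higher $P_k$'s; at $u=r$ the constraint on the $x^{rd}$-coefficient forces $r\le 1$, and in the case $r=1$ this yields the extra summand $c_{d,0}=-c_{d,d}$ (using Claim~\ref{claim:2}), which together with the mirror contribution from $P(P(0,x),y)$ produces the single correction $-\delta_{r,1}c_{d,d}$.

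The main obstacle I anticipate is the careful bookkeeping of the type-2 contributions: verifying that in the degree-$rd$ piece of $P^k$ (for $k\in\calS_{2,r}$) only the $(a,b)$ pairs tallied in $\beta_r$ and the single boundary pair giving the $\delta_{r,r_0}$-correction actually contribute, and that the factor $(1+\delta_{q_1,q_2})$ correctly reflects the ordered-pair accounting when $q_1=q_2$. A related subtlety is a parity argument: in characteristic $p$ odd, any decomposition $rd=ak_1+bk_2$ with $a,b,k_1,k_2\in s_1(p)$ forces $r$ to be even, so $\beta_r$ vanishes whenever $1+(-1)^r=0$, which makes the factor $(1+(-1)^r)\gamma_r$ in equation~\eqref{eq:ceq3} consistent with the direct computation at both boundary values.
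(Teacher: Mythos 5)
Your overall strategy --- extracting the coefficient of a $z$-free monomial $x^{du}y^{d(r-u)}$ from $J_P=0$ and evaluating $\sum_k c_{k,k}P(x,y)^k$ term by term via the type-$0/1/2$ classification, the freshman's dream and Lucas' theorem --- is the same as the paper's, and your treatment of the $k=d$, $k=r$ and $k\in\calS_{2,r}$ contributions (including the boundary pair $(a,b)=(d,q)$ and its mirror when $q_1=q_2$) matches the paper's computation. But your term-by-term list has a genuine omission: you never account for $c_{k,k}P(x,y)^k$ with $k\in\calS_{1,r}\setminus\{d\}$. For such $k$ the degree-$rd$ piece is $c_{k,k}P_{rd/k}(x,y)^k$, and one must check (as the paper does) that $rd/k$ again lies in $\calS_{1,r}$, so that these terms contribute $\alpha_r(x-y)^{rd}$. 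Since $\alpha_r$ appears in neither \eqref{eq:ceq2} nor \eqref{eq:ceq3}, you have to show it drops out: for \eqref{eq:ceq2} because $\alpha_r\neq 0$ forces $r\in s_1(p)$, whence $[x^{du}y^{d(r-u)}](x-y)^{rd}\equiv\pm\binom{r}{u}\equiv 0\modp$ for $0<u<r$; for \eqref{eq:ceq3} because $(1+(-1)^r)\alpha_r=0$ for the same reason. Your closing parity remark about $\beta_r$ is correct but is not the parity fact actually needed here; the one that matters concerns $\alpha_r$. (You also leave the non-integer case $r=r_0$, $u=q_2q/d$ of \eqref{eq:ceq2} to ``direct inspection,'' whereas it needs its own argument that the two rotated summands of $J_P$ cannot produce the mixed monomial $x^{q_2q}y^{q_1d}$.)

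Second, your bookkeeping for \eqref{eq:ceq3} is off. Each of the single identities $[x^{rd}]J_P=0$ and $[y^{rd}]J_P=0$ already contains \emph{both} boundary evaluations: in $[x^{rd}]J_P=0$, for instance, the summand $P(P(x,y),z)$ yields the $u=r$ formula, the summand $P(P(z,x),y)$ (reduced to $\sum_k c_{k,k}P(0,x)^k$) yields the $u=0$ formula, and $P(P(y,z),x)$ yields the single term $\delta_{r,1}c_{d,0}=-\delta_{r,1}c_{d,d}$; this one identity is already exactly \eqref{eq:ceq3}. If you literally add the two coefficient identities at $u=0$ and $u=r$, every contribution --- including the $Q(Q(y))$-type terms and the two $\delta_{r,1}c_{d,0}$ corrections --- is counted twice, and you obtain twice \eqref{eq:ceq3} equated to zero, which is vacuous when $p=2$; that case cannot be excluded, since $d\in s_1(p)$ is compatible with $p=2$. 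Either work with a single monomial as the paper does, or redo the accounting so that the mirror contributions are not simultaneously double-counted and then reported as ``a single correction.''
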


\begin{claim}\label{claim:6}
We have $\{\lfloor r_0\rfloor+1,\ldots,d\}\subseteq\bigcup_{m=0}^2\calS_{m,1}$.
\end{claim}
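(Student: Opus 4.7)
The plan is to establish Claim~\ref{claim:6} by downward induction on $r$ from $d$ down to $\lfloor r_0\rfloor+1$, showing at each stage that $P_r$ is of type $0$, $1$, or $2$. The base case $r=d$ is immediate from Claim~\ref{claim:2}: it gives $P_d(x,y)=c_{d,d}(x^d-y^d)$ with $c_{d,d}\neq 0$ and $d\in s_1(p)$, hence $P_d$ is of type $1$. For the inductive step I fix an integer $r$ with $\lfloor r_0\rfloor+1\le r\le d-1$ and assume $\{r+1,\dots,d\}\subseteq\bigcup_m\calS_{m,1}$. Since the defining conditions of $\calS_{m,1}$ and $\calS_{m,r}$ coincide on integers greater than $r$, the hypotheses of Claims~\ref{claim:4} and~\ref{claim:5} are satisfied; moreover $r>r_0$ is an integer, so $\delta_{r,r_0}=0$ and the corresponding delta terms in \eqref{eq:ceq1}--\eqref{eq:ceq3} vanish.

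I would first combine Claim~\ref{claim:4} with Corollary~\ref{cor:Sing}: for every $1\le u<r$ with $u\notin s_1(p)$, Corollary~\ref{cor:Sing} supplies some $0<i<u$ with $p\nmid\binom{u}{i}$, so \eqref{eq:ceq1} together with $c_{d,d}\neq 0$ forces $c_{r,u}=0$. Next I would split into three cases according to whether $r$ lies in $s_1(p)$, in $s_2(p)\setminus s_1(p)$, or in neither. If $r\in s_1(p)$, then Corollary~\ref{cor:Sing} gives $p\mid\binom{r}{u}$ for all $0<u<r$, so \eqref{eq:ceq2} yields $c_{r,u}=0$ in that range; since $r$ is odd when $p$ is odd and $2=0$ when $p=2$, the factor $1+(-1)^r$ vanishes in $\calR$, and \eqref{eq:ceq3} together with the freshman's dream forces $c_{r,0}=-c_{r,r}$, giving type $0$ or type $1$. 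If $r=r_1+r_2\in s_2(p)\setminus s_1(p)$ with $r_1\ge r_2\in s_1(p)$, Corollary~\ref{cor:Sing2}(a) kills $c_{r,u}$ for $u\notin\{0,r_1,r_2,r\}$ via \eqref{eq:ceq2} and yields $c_{r,r_1}^d=c_{r,r_2}^d$; the integral domain structure and the freshman's dream then give $c_{r,r_1}=c_{r,r_2}$, matching type $2$.

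In the remaining case $r\notin s_1(p)\cup s_2(p)$, Corollary~\ref{cor:Sing2}(b) supplies $0<\ell<m<r$ with $p\nmid\binom{r}{m}\binom{m}{\ell}$. Taking $(u,i)=(m,\ell)$ in \eqref{eq:ceq1} gives $c_{r,m}=0$, and then $u=m$ in \eqref{eq:ceq2} forces $\gamma_r=0$. Since $\beta_r=0$ whenever $r\notin s_1(p)\cup s_2(p)$, the identity $\gamma_r=c_{r,r}c_{d,d}^r$ yields $c_{r,r}=0$; equation \eqref{eq:ceq2} then kills every remaining $c_{r,u}$ for $0<u<r$, and \eqref{eq:ceq3} kills $c_{r,0}$, so $P_r=0$ is of type $0$. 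The main obstacle I anticipate is the book-keeping across these subcases, in particular distinguishing when a sign factor or a binomial coefficient vanishes in $\calR$ rather than in $\mathbb{Z}$, and confirming that $r_1=r_2$ can occur only for odd $p$ so that the coefficient $\tfrac{1}{1+\delta_{r_1,r_2}}$ appearing in type $2$ makes sense in $\calR$.
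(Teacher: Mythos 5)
Your proposal is correct and follows essentially the same route as the paper: downward induction from $r=d$, with the three-way case split $r\in s_1(p)$, $r\in s_2(p)$, or neither, using \eqref{eq:ceq1}--\eqref{eq:ceq3} together with Corollaries~\ref{cor:Sing} and \ref{cor:Sing2} exactly as the paper does (your preliminary step killing $c_{r,u}$ for $u\notin s_1(p)$ via \eqref{eq:ceq1} is harmless but redundant, since the subsequent case analysis already handles those coefficients).
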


\begin{proof}[Proof of Claim~\ref{claim:6}]
We prove by decreasing induction that any integer $k\in\{\lfloor r_0\rfloor+1,\ldots,d\}$ is in $\bigcup_{m=0}^2\calS_{m,1}$. This is true for $k=d$ since $d\in\calS_{1,1}$. Suppose that the result holds for $k=r+1,\ldots,d$ for some integer $r$ such that $r_0<r<d$ and let us show that it holds for $k=r$. There are three mutually exclusive cases to consider.
\begin{itemize}
\item If $r\notin s_1(p)\cup s_2(p)$, then $\beta_r=0$ and by Corollary~\ref{cor:Sing2}(b) there exists integers $i_0,u_0$ satisfying $1\leq i_0<u_0<r$ such that ${r\choose u_0}{u_0\choose i_0}\not\equiv 0\modp$. Using \eqref{eq:ceq1} with $i=i_0$ and $u=u_0$ we immediately obtain $c_{r,u_0}=0$. Then, using \eqref{eq:ceq2} with $u=u_0$ we obtain $\gamma_r=0$, which implies $c_{r,r}=0$ (since $\beta_r=0$). Using again \eqref{eq:ceq2} we obtain that $c_{r,u}=0$ for every integer $u$ such that $1\leq u<r$. Finally, by \eqref{eq:ceq3} we obtain $c_{r,0}=0$ and hence $P_r=0$, that is, $r\in\calS_{0,1}$.
\item If $r\in s_1(p)$, then by Corollary~\ref{cor:Sing} we have ${r\choose u}\equiv 0\modp$ for every integer $u$ such that $1\leq u<r$. Using \eqref{eq:ceq2} we then obtain $c_{r,u}=0$ for every integer $u$ such that $1\leq u<r$. In \eqref{eq:ceq3} we have $(-1)^r\equiv -1\modp$ and hence $0=c_{r,r}^d+c_{r,0}^d=(c_{r,r}+c_{r,0})^d$. Therefore $P_r$ is of type 0 or 1, that is, $r\in\calS_{0,1}\cup\calS_{1,1}$.
\item If $r=r_1+r_2\in s_2(p)$, with $r_1\geq r_2$ and $r_1,r_2\in s_1(p)$, then by Corollary~\ref{cor:Sing2}(a) we have ${r\choose u}\equiv 0\modp$ for every integer $u\in\{1,\ldots,r-1\}\setminus\{r_1,r_2\}$. Using \eqref{eq:ceq2} we obtain $c_{r,u}=0$ for every integer $u\in\{1,\ldots,r-1\}\setminus\{r_1,r_2\}$. Now, if $r_1\neq r_2$, then using \eqref{eq:ceq2} for $u=r_1$ and then for $u=r_2$, we obtain $c_{r,r_1}=c_{r,r_2}$. Therefore, $P_r$ is of type 0 or 2, that is, $r\in\calS_{0,1}\cup\calS_{2,1}$.
\end{itemize}
This completes the proof of the claim.
\end{proof}

We now show that $\{2,\ldots,d\}\subseteq\calS_{0,1}\cup\calS_{1,1}$ (i.e., $P_k$ is of type $0$ or $1$ for $k=2,\ldots,d$).

\begin{claim}\label{claim:7}
We have $r_0=1$ (i.e., $\calS_{2,1}=\varnothing$).
\end{claim}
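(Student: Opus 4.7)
The plan is to argue by contradiction. Assume $\calS_{2,1}\neq\varnothing$, so that $q=q_1+q_2=\sup\calS_{2,1}$ is defined with $q_1\geq q_2$ both in $s_1(p)$. Since no type~2 polynomial exists above $q$, we have $\calS_{2,q}=\varnothing$, hence $\beta_q=0$ and $\gamma_q=c_{q,q}\,c_{d,d}^q$. The goal is to show $P_q=0$, which would contradict $q\in\calS_{2,1}$.

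\emph{Reduction to $c_{q,q}=0$.} I would apply Claim~\ref{claim:5} at $r=q$ (valid by Claim~\ref{claim:6}). From part~(2) at $u=q_1$, using $\binom{q}{q_1}\equiv 1+\delta_{q_1,q_2}\pmod p$ from Corollary~\ref{cor:Sing2}(a) and $\delta_{q,r_0}=0$, one obtains
$$c_{d,d}\,c_{q,q_1}^d+(-1)^{q_2}(1+\delta_{q_1,q_2})\,c_{q,q}\,c_{d,d}^q=0,$$
and from part~(3), using $\delta_{q,1}=0$ (as $q\geq 2$) and $\delta_{q,r_0}=0$, one obtains
$$c_{d,d}(c_{q,q}^d+c_{q,0}^d)+(1+(-1)^q)\,c_{q,q}\,c_{d,d}^q=0.$$
Since $\calR$ is an integral domain and $c_{d,d}\neq 0$, the vanishing $c_{q,q}=0$ would force $c_{q,q_1}=c_{q,0}=0$, whence $P_q=0$.

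\emph{Proof that $c_{q,q}=0$.} I would apply Claim~\ref{claim:5}(2) at $r=r_0$ and $u=q_2q/d$. By the freshman's dream, $P_d^{q_1}=c_{d,d}^{q_1}(x^{dq_1}-y^{dq_1})$ and $P_q^{q_2}$ expands into four monomials with bidegrees $(qq_2,0)$, $(q_1q_2,q_2^2)$, $(q_2^2,q_1q_2)$, $(0,qq_2)$. Writing $d=p^e$, $q_1=p^a$, $q_2=p^b$ with $e>b$ and $a\geq b$ gives $a+e>2b$, hence $dq_1>q_2^2$; this kills every candidate cross-term in $[x^{qq_2}y^{dq_1}](P_d^{q_1}P_q^{q_2})$ except $-c_{d,d}^{q_1}c_{q,q}^{q_2}$. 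In the non-integer subcase the first two summands of Claim~\ref{claim:5}(2) are ignored by convention, so the equation reduces to $c_{q,q}^{q_2+1}c_{d,d}^{q_1}=0$ and $c_{q,q}=0$. In the integer subcase, $r_0\in s_3(p)$, so $\beta_{r_0}=0$ and $\gamma_{r_0}=c_{r_0,r_0}c_{d,d}^{r_0}$; the plan is to combine the equation at $u=q_2q/d$ with the Claim~\ref{claim:5}(2) equations at the other admissible $u\in\{q_1,q_1q_2/d,q_2^2/d,q_1+q_2^2/d,q_1+q_1q_2/d\}$ (whose binomials $\binom{r_0}{u}$ are all $\equiv 1\pmod p$ by Lucas' theorem applied to $r_0=p^a+p^{a+b-e}+p^{2b-e}$), together with Claim~\ref{claim:4} (which relates $c_{q,q_1}^{q_2+1}$ to $c_{r_0,q_2q/d}$) and Claim~\ref{claim:5}(3) at $r=r_0$, and then to eliminate the auxiliary unknowns $c_{r_0,u}$ and $c_{r_0,r_0}$, again forcing $c_{q,q}=0$.

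The hard part will be the integer subcase, where one must carefully track the six cross-terms in $[x^{du}y^{d(r_0-u)}](P_d^{q_1}P_q^{q_2})$ (some involving $c_{q,q}$ or $c_{q,0}$ via the corner monomials of $P_q^{q_2}$, and others involving $c_{q,q_1}$ via the middle monomials), apply Lucas' theorem to compute the binomial coefficients $\binom{r_0}{u}\pmod p$, and solve the resulting joint system from Claims~\ref{claim:4} and~\ref{claim:5} to eliminate the auxiliary unknowns $c_{r_0,u}$ and $c_{r_0,r_0}$, which are not constrained by Claim~\ref{claim:6} since $r_0$ is merely the lower endpoint of the range classified there.
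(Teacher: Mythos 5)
Your strategy is the paper's: assume $\calS_{2,1}\neq\varnothing$ and force $P_q=0$. Your reduction step (the two equations from Claim~\ref{claim:5} at $r=q$, $u=q_1$, namely \eqref{eq:ceq2q} and \eqref{eq:ceq3q}, showing that $c_{q,q}=0$ implies $c_{q,q_1}=c_{q,0}=0$) is correct, and your treatment of the non-integer subcase is complete and matches the paper: the coefficient $[x^{qq_2}y^{dq_1}](P_d^{q_1}P_q^{q_2})$ is indeed $-c_{d,d}^{q_1}c_{q,q}^{q_2}$, and since $dq_1=p^{e+a}>p^{2b}=q_2^2$ no other cross-term survives.

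The integer subcase, however, is only a plan, and as stated it does not close. Listing the available equations and asserting that eliminating $c_{r_0,u}$ and $c_{r_0,r_0}$ "again forces $c_{q,q}=0$" is not a proof; the system is not obviously determined, and the elimination has to be exhibited. The paper's chain is: (i) Claim~\ref{claim:4} at $u=q_2q/d$, $i=q_1q_2/d$, raised to the power $d$ and combined with \eqref{eq:ceq2q} raised to the power $q_2+1$, gives $c_{r_0,q_2q/d}^d=(1+\delta_{q_1,q_2})c_{d,d}^{q_1-1}c_{q,q}^{q_2+1}$; substituted into your equation at $u=q_2q/d$, the first and third summands \emph{cancel}, yielding only $\gamma_{r_0}=0$ --- so at this stage nothing about $P_q$ has yet been gained; (ii) the decisive step is a \emph{second} application of Claim~\ref{claim:4}, at the different exponent $u=q_1+q_1q_2/d$ with $i=q_1$, which gives $c_{r_0,q_1+q_1q_2/d}=0$; then \eqref{eq:ceq2} at that same $u$, with both $c_{r_0,u}^d$ and $\gamma_{r_0}$ now gone, isolates the cross-term $c_{q,q}\,c_{q,q_1}^{q_2}c_{d,d}^{q_1}=0$, and \eqref{eq:ceq2q} converts this into $c_{q,q}=0$. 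Your outline never produces an equation involving the $c_{q,\cdot}$ alone, which is exactly what this second use of Claim~\ref{claim:4} accomplishes. Two smaller inaccuracies in the plan: the binomials $\binom{r_0}{u}$ are \emph{not} all $\equiv 1\pmod p$ --- when $q_1=q_2$ the base-$p$ digit of $r_0$ at the position of $q_1q_2/d=q_2^2/d$ is $2$, so $\binom{r_0}{q_1q_2/d}\equiv 2\pmod p$ (only $\binom{r_0}{q_2q/d}\equiv 1$ is needed, and that one is true); and Claim~\ref{claim:5}, second bullet, at $r=r_0$ plays no role in the argument.
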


\begin{proof}[Proof of Claim~\ref{claim:7}]
We proceed by contradiction. Suppose that $r_0>1$, that is, $\calS_{2,1}\neq\varnothing$ and $r_0=q_1+\frac{q_2q}{d}$. Using \eqref{eq:ceq2} with $r=r_0$ and $u=u_0=\frac{q_2q}{d}$, we obtain
\begin{equation}\label{eq:4f4d}
c_{d,d}{\,}c_{r_0,u_0}^d+(-1)^{q_1}{r_0\choose u_0}\gamma_{r_0}-(1+\delta_{q_1,q_2}){\,}c_{d,d}^{q_1}{\,}c_{q,q}^{q_2+1}~=~ 0.
\end{equation}

Setting $r=q$ and $u=q_1$ in \eqref{eq:ceq2} and \eqref{eq:ceq3}, we obtain
\begin{equation}\label{eq:ceq2q}
c_{d,d}{\,}c_{q,q_1}^d ~=~ (1+\delta_{q_1,q_2}){\,}c_{q,q}{\,}c_{d,d}^q
\end{equation}
and
\begin{equation}\label{eq:ceq3q}
c_{d,d}(c_{q,q}^d+c_{q,0}^d)+2c_{q,q}{\,}c_{d,d}^q ~=~ 0.
\end{equation}
Indeed, $\delta_{q,r_0}=0$ and since $\calS_{2,q}=\varnothing$ we have $\beta_q=0$ and hence $\gamma_q=c_{q,q}c_{d,d}^q$. Moreover, by Corollary~\ref{cor:Sing2}(a) we have ${q\choose q_1}\equiv (1+\delta_{q_1,q_2})\modp$.

Now we have two cases to consider.
\begin{itemize}
\item If $r_0$ is not an integer, then the first two summands of \eqref{eq:4f4d} are to be ignored and hence we immediately derive $c_{q,q}=0$. Then from \eqref{eq:ceq2q} and \eqref{eq:ceq3q} we derive $c_{q,0}=c_{q,q_1}=0$, that is, $P_q=0$ (i.e., $q\in\calS_{0,1}$), a contradiction.
\item If $r_0$ is an integer (in which case $d$ divides both $q_1q_2$ and $q_2^2$), then using \eqref{eq:ceq1} with $r=r_0$, $u=u_0=\frac{q_2q}{d}$, and $i=\frac{q_1q_2}{d}$ (we note that ${u_0\choose i}\equiv (1+\delta_{q_1,q_2})\modp$ by Corollary~\ref{cor:Sing2}(a)) and then raising both sides of the resulting equation to the power $d$ we obtain
\begin{equation}\label{eq:ceq1rz}
c_{q,q_1}^{d(q_2+1)} ~=~ (1+\delta_{q_1,q_2}){\,}c_{r_0,u_0}^dc_{d,d}^{du_0}.
\end{equation}
Raising both sides of \eqref{eq:ceq2q} to the power $(q_2+1)$ and then combining the resulting equation with \eqref{eq:ceq1rz} we obtain
$$
c_{r_0,u_0}^d ~=~ (1+\delta_{q_1,q_2}){\,}c_{d,d}^{q_1-1}c_{q,q}^{q_2+1}.
$$
Substituting for $c_{r_0,u_0}^d$ into \eqref{eq:4f4d} and observing by Lucas' theorem that ${r_0\choose u_0}\equiv 1\modp$ we obtain $\gamma_{r_0}=0$.

Now, using \eqref{eq:ceq1} with $r=r_0$, $u=u_0=q_1+\frac{q_1q_2}{d}$, and $i=q_1$, we obtain ${u_0\choose q_1}{\,}c_{r_0,u_0}c_{d,d}^{u_0}=0$, and therefore $c_{r_0,u_0}=0$ (since ${u_0\choose q_1}\equiv 1\modp$ by Corollary~\ref{cor:Sing2}(a)). Using \eqref{eq:ceq2} with the same $r=r_0$ and $u=u_0$, we then obtain
$$
c_{q,q}{\,}[x^{q_1d+q_1q_2}y^{q_2^2}](P_d(x,y)^{q_1}P_q(x,y)^{q_2})~=~0,
$$
that is, $c_{q,q}{\,}c_{q,q_1}^{q_2}c_{d,d}^{q_1}=0$. Combining this latter equation with \eqref{eq:ceq2q} and \eqref{eq:ceq3q} we obtain $c_{q,q}=c_{q,0}=c_{q,q_1}=0$, that is, $P_q=0$, a contradiction.
\end{itemize}
This completes the proof of the claim.
\end{proof}

\begin{proof}[Proof of Proposition~\ref{prop:d1lt1}]
On the one hand, using \eqref{eq:ceq3} with $r=r_0=1$ and the fact that $\calS_{2,1}=\varnothing$ (i.e., $q$ does not exist), we obtain
$$
0 ~=~ c_{d,d}(c_{1,1}^d+c_{1,0}^d-1^d) ~=~ c_{d,d}(c_{1,1}+c_{1,0}-1)^d,
$$
that is,
\begin{equation}\label{eq:comp1}
c_{1,1}+c_{1,0} ~=~ 1.
\end{equation}
On the other hand, by Claims~\ref{claim:6} and \ref{claim:7} for any $M\in\{x,y,z\}$ we have
\begin{eqnarray*}
[M]P(P(x,y),z) &=& [M]\Big(\sum_{k\in\calS_{1,1}}P_k(P(x,y),z)+P_1(P(x,y),z)+P_0\Big)\\
&=& [M]\Big(\sum_{k\in\calS_{1,1}}c_{k,k}(P(x,y)^k-z^k)+c_{1,1}P(x,y)+c_{1,0}z+c_{0,0}\Big).
\end{eqnarray*}
Clearly, the sum over $k\in\calS_{1,1}$ above cannot contain monomials of degree $1$. Therefore we have
$$
[M]P(P(x,y),z) ~=~ [M](c_{1,1}P_1(x,y)+c_{1,0}z) ~=~ [M](c_{1,1}(c_{1,1}x+c_{1,0}y)+c_{1,0}z).
$$
Since the identity $[x]J_P(x,y,z)=0$ can be written as $\sum_{M\in\{x,y,z\}}[M]P(P(x,y),z)=0$, we have
\begin{equation}\label{eq:comp2}
c_{1,1}(c_{1,1}+c_{1,0})+c_{1,0} ~=~ 0.
\end{equation}
Since the system \eqref{eq:comp1}--\eqref{eq:comp2} is inconsistent we immediately reach a contradiction.
\end{proof}

\begin{proof}[Proof of the Main Theorem]
By Proposition~\ref{prop:d1lt1}, there exist two polynomial functions $R\colon\calR\to\calR$ and $S\colon\calR\to\calR$ such that
$$
P(x,y) ~=~ x{\,}R(y)+S(y).
$$
We then have
\begin{eqnarray*}
J_P(x,y,z) &=& x{\,}R(y)R(z)+S(y)R(z)+S(z)\\
&& \null + y{\,}R(z)R(x)+S(z)R(x)+S(x)\\
&& \null + z{\,}R(x)R(y)+S(x)R(y)+S(y).
\end{eqnarray*}
Suppose that $\deg(R)=r>1$ and set $A=[y^r]R(y)$. We can then readily see that
$$
[x{\,}y^rz^r]J_P(x,y,z) ~=~ A^2.
$$
We then have $A=0$, a contradiction. Therefore $R(y)=A_1y+A_0$ for some $A_1,A_0\in\calR$. Now, suppose that $\deg(S)=s>1$ and set $B=[y^s]S(y)$. It is then easy to see that
$$
[y^s]J_P(x,y,z) ~=~ (A_0+1)B\quad\text{and}\quad [y^sz]J_P(x,y,z) ~=~ A_1B.
$$
However, one can readily see that $P$ cannot satisfy Jacobi's identity if $A_1=0$ and $A_0=-1$. Thus we must have $B=0$, again a contradiction.

Finally, the polynomial $P$ must be of the form
$$
P(x,y) ~=~ Axy+Bx+Cy+D
$$
for some $A,B,C,D\in\calR$ and we can immediately verify that this polynomial satisfies Jacobi's identity iff
$$
3A^2 ~=~ 3D(B+1) ~=~ A(2B+C) ~=~ B^2+BC+C+AD ~=~ 0.
$$
The statement of the Main Theorem then follows straightforwardly.
\end{proof}

\appendix
\section{Proofs of Claims \ref{claim:4} and \ref{claim:5}}\label{app:A}

Before providing the proofs of Claims \ref{claim:4} and \ref{claim:5}, we first show that for any $r\geq r_0$ and any $k=k_1+k_2\in\calS_{2,r}$, with $k_1\geq k_2$ and $k_1,k_2\in s_1(p)$, the following conditions hold.
\begin{enumerate}
\item[(a)] $k_1=q_1$ and $k_2\leq q_2$.
\item[(b)] $d(r-k_1)\geq qk_2$. The equality holds iff $r=r_0$ and $k=q$.
\item[(c)] $d(r-k_2)\geq qk_1$. The equality holds iff $r=r_0$, $k=q$, and $q_1=q_2$.
\item[(d)] $ak_1+bk_2\leq rd$ for all $a\leq d$ and $b\leq q$. The equality holds iff $a=d$, $b=q$, $k_2=q_2$, and $r=r_0$.
\item[(e)] $ak_1+bk_2\leq rd$ for all $a\leq q$ and $b\leq d$. The equality holds iff $a=q$, $b=d$, $k_2=q_2$, $q_1=q_2$, and $r=r_0$.
\end{enumerate}

\begin{proof}
if $\calS_{2,1}=\varnothing$, then $\calS_{2,r}=\varnothing$ for every $r\geq r_0=1$ and then there is nothing to prove. We therefore assume that $\calS_{2,1}\neq\varnothing$. We then have $r_0=q_1+q_2q/d$ and $q_1<r_0\leq r<k\leq q$.
\begin{enumerate}
\item[(a)] We have $k_1=q_1$. Indeed, if we had $k_1>q_1$, then we would have $k>k_1\geq pq_1\geq 2q_1\geq q_1+q_2=q$, a contradiction. If we had $k_1<q_1$, then we would have $q_1\geq pk_1\geq 2k_1\geq k_1+k_2=k$, a contradiction. Finally, $k\leq q$ implies $k_2\leq q_2$.
\item[(b)] We have $d(r-k_1)-qk_2\geq d(r_0-q_1)-qq_2=0$.
\item[(c)] We have $d(r-k_2)-qk_1\geq d(r_0-q_2)-qq_1=(q_1-q_2)(d-q)\geq 0$.
\item[(d)] We have $ak_1+bk_2\leq dq_1+qq_2=r_0d\leq rd$.
\item[(e)] We have $ak_1+bk_2\leq qq_1+dq_2\leq dq_1+qq_2=r_0d\leq rd$, where the second inequality is equivalent to $(q_1-q_2)(d-q)\geq 0$.\qedhere
\end{enumerate}
\end{proof}

\begin{proof}[Proof of Claim~\ref{claim:4}]
We consider the identity $[M]J_P(x,y,z)=0$ for $M=x^{di}y^{d(u-i)}z^{r-u}$. Since $di\geq d$ and $d(u-i)\geq d$, we have $[M]P(P(y,z),x)=0$ and $[M]P(P(z,x),y)=0$. Also, we have
\begin{eqnarray*}
[M]P(P(x,y),z) &=& [M]\sum_{k\in\calS_{1,r}}P_k(P(x,y),z)+[M]\sum_{k\in\calS_{2,r}}P_k(P(x,y),z)\\
&& \null + [M]P_r(P(x,y),z) + [M]\sum_{k<r}P_k(P(x,y),z).
\end{eqnarray*}
Let us compute the latter four summands separately.
\begin{itemize}
\item We clearly have
$$
[M]\sum_{k\in\calS_{1,r}}P_k(P(x,y),z) ~=~ [M]\sum_{k\in\calS_{1,r}}c_{k,k}(P(x,y)^k-z^k) ~=~0.
$$
\item Assuming that $\calS_{2,r}\neq\varnothing$ and setting $M'=x^{di}y^{d(u-i)}$, we obtain
\begin{eqnarray*}
\lefteqn{[M]\sum_{k\in\calS_{2,r}}P_k(P(x,y),z)}\\
&=& [M]\sum_{k\in\calS_{2,r}}\frac{c_{k,k_1}}{1+\delta_{k_1,k_2}}\big(P_{du/k_1}(x,y)^{k_1}z^{k_2}+P_{du/k_2}(x,y)^{k_2}z^{k_1}\big)\\
&=& [M']\sum_{k\in\calS_{2,r}}\frac{c_{k,k_1}}{1+\delta_{k_1,k_2}}\big(P_{du/k_1}(x,y)^{k_1}\delta_{k_2,r-u}+P_{du/k_2}(x,y)^{k_2}\delta_{k_1,r-u}\big)\\
&=& [M']\sum_{k\in\calS_{2,r}}\frac{c_{k,k_1}}{1+\delta_{k_1,k_2}}\big(P_{d(r-k_2)/k_1}(x,y)^{k_1}\delta_{k_2,r-u}+P_{d(r-k_1)/k_2}(x,y)^{k_2}\delta_{k_1,r-u}\big).
\end{eqnarray*}
If $d(r-k_2)/k_1>q$, then $P_{d(r-k_2)/k_1}$ is of type 0 or 1, so it does not contain any product terms and hence $M'$ cannot appear in $P_{d(r-k_2)/k_1}(x,y)^{k_1}$. We arrive at the same conclusion for $P_{d(r-k_1)/k_2}$. Using conditions (b) and (c) above, we then obtain
\begin{eqnarray*}
\lefteqn{[M]\sum_{k\in\calS_{2,r}}P_k(P(x,y),z)}\\
&=& [M']\frac{\delta_{r,r_0}{\,}c_{q,q_1}}{1+\delta_{q_1,q_2}}\big(\delta_{q_1,q_2}{\,}P_q(x,y)^{q_1}\delta_{q_2,r_0-u}+P_q(x,y)^{q_2}\delta_{q_1,r_0-u}\big)\\
&=& [M']{\,}\delta_{r,r_0}\delta_{u,\frac{qq_2}{d}}{\,}c_{q,q_1}P_q(x,y)^{q_2}\\
&=& [M']{\,}\delta_{r,r_0}\delta_{u,\frac{qq_2}{d}}{\,}\frac{c_{q,q_1}^{q_2+1}}{1+\delta_{q_1,q_2}}\big(x^{q_1q_2}y^{q_2^2}+x^{q_2^2}y^{q_1q_2}\big)\\
&=& \delta_{r,r_0}\delta_{u,\frac{qq_2}{d}}{\,}\chi_{\{\frac{q_1q_2}{d},\frac{q_2^2}{d}\}}(i){\,}c_{q,q_1}^{q_2+1}.
\end{eqnarray*}
\item Since $M$ is of degree $du$ in $(x,y)$, we have
\begin{eqnarray*}
[M]P_r(P(x,y),z) &=& [M]\sum_{j=0}^rc_{r,j}{\,}P(x,y)^jz^{r-j} ~=~ [M]{\,}c_{r,u}{\,}P(x,y)^uz^{r-u}\\
&=& [M]{\,}c_{r,u}{\,}c_{d,d}^u(x^d-y^d)^uz^{r-u} ~=~ (-1)^{u-i}{u\choose i}{\,}c_{r,u}c_{d,d}^u.
\end{eqnarray*}
\item Let us now compute $[M]\sum_{k<r}P_k(P(x,y),z)$. If $k<r-u$, the degree in $z$ of $P_k(P(x,y),z)$ cannot reach $r-u$. If $r-u\leq k<r$, we have
$$
[M]P_k(P(x,y),z) ~=~ [M]{\,}c_{k,k-(r-u)}P(x,y)^{k-(r-u)}z^{r-u}.
$$
This expression is $0$ since the degree in $(x,y)$ of $P(x,y)^{k-(r-u)}$ does not exceed $d(k-(r-u))<du$.
\end{itemize}
This completes the proof of the claim.
\end{proof}

\begin{proof}[Proof of Claim~\ref{claim:5}]
We first consider the identity $[M]J_P(x,y,z)=0$ for the monomials $M=x^{du}y^{d(r-u)}$ with $0\leq u\leq r$. These monomials are of degree $rd$ in $(x,y)$ and $0$ in $z$. Thus we have
\begin{eqnarray*}
[M]P(P(x,y),z) &=& [M]\sum_{k\in\calS_{1,r}}c_{k,k}P(x,y)^k+[M]\sum_{k\in\calS_{2,r}}c_{k,k}P(x,y)^k\\
&& \null + [M]{\,}c_{r,r}P(x,y)^r + [M]\sum_{k<r}c_{k,k}P(x,y)^k.
\end{eqnarray*}
Let us compute the latter four summands separately.
\begin{itemize}
\item We show that $[M]\sum_{k\in\calS_{1,r}}c_{k,k}P(x,y)^k = [M](c_{d,d}P_r(x,y)^d+\alpha_r(x-y)^{rd})$. Since $k\in\calS_{1,r}$ implies $k\in s_1(p)$, we have
    $$
    [M]\sum_{k\in\calS_{1,r}}c_{k,k}P(x,y)^k ~=~ [M]\sum_{k\in\calS_{1,r}}c_{k,k}P_{\frac{rd}{k}}(x,y)^k.
    $$
We then observe that if $d>k\in\calS_{1,r}$ (hence $d\geq pk$) and $P_{\ell}\neq 0$, with $\ell=\frac{rd}{k}$, then necessarily $\ell\in\calS_{1,r}$. Indeed, since $\ell >r$ we must have $\ell\in\calS_{1,r}\cup\calS_{2,r}$ by the hypotheses of the claim. If $r_0=1$, then $\calS_{2,r}=\varnothing$ and hence $\ell\in\calS_{1,r}$. If $r_0>1$, then we have $\ell=\frac{rd}{k}\geq pr\geq 2r_0>2q_1\geq q$, and hence $\ell\in\calS_{1,r}$ by definition of $q$.

Therefore, we have
\begin{eqnarray*}
[M]\sum_{k\in\calS_{1,r}\setminus\{d\}}c_{k,k}P(x,y)^k &=& [M]\sum_{k\in\calS_{1,r}\setminus\{d\}}c_{k,k}\sum_{\ell\in\calS_{1,r},{\,}k\ell=rd}c_{\ell,\ell}^k(x-y)^{rd}\\ &=& [M]{\,}\alpha_r(x-y)^{rd},
\end{eqnarray*}
which immediately gives the stated identity.
\item Assuming that $\calS_{2,r}\neq\varnothing$, let us show that
$$
[M]\sum_{k\in\calS_{2,r}}c_{k,k}P(x,y)^k ~=~ [M]\big(\beta_r(x-y)^{rd}+\delta_{r,r_0}(1+\delta_{q_1,q_2})c_{q,q}(P_d(x,y)^{q_1}P_q(x,y)^{q_2})\big).
$$
Indeed, the left-hand side of this identity can be rewritten as
$$
[M]\sum_{k\in\calS_{2,r}}\sum_{a,b=0}^dc_{k,k}P_a(x,y)^{k_1}P_b(x,y)^{k_2}.
$$
Since $P_a(x,y)^{k_1}P_b(x,y)^{k_2}$ is a homogeneous polynomial function of degree $ak_1+bk_2$, we can use conditions (d) and (e) above to analyze all the summands corresponding to $a\leq q$ or $b\leq q$. If $a>q$ and $b>q$ (hence $a>r$ and $b>r$ since $q>r$ when $\calS_{2,r}\neq\varnothing$), then necessarily $a,b\in\calS_{0,r}\cup\calS_{1,r}$ and we obtain the stated identity.
\item We have $[M]{\,}c_{r,r}P(x,y)^r = [M]{\,}c_{r,r}P_d(x,y)^r = [M]{\,}c_{r,r}c_{d,d}^r(x-y)^{rd}$.
\item We have $[M]\sum_{k<r}c_{k,k}P(x,y)^k=0$ since the degree of $P(x,y)^k$ is bounded by $kd<rd$.
\end{itemize}
Summing up, we obtain
\begin{eqnarray}
[M]P(P(x,y),z) &=& [M]\big(c_{d,d}P_r(x,y)^d+(\alpha_r+\gamma_r)(x-y)^{rd}\nonumber\\
&& \null + \delta_{r,r_0}(1+\delta_{q_1,q_2})c_{q,q}(P_d(x,y)^{q_1}P_q(x,y)^{q_2})\big).\label{eq:s35h}
\end{eqnarray}

If $r,u$ are integers such that $1\leq u<r$, then $M=x^{du}y^{d(r-u)}$ is a polynomial multiple of $x^dy^d$. Since no monomial in $P(P(y,z),x)$ and $P(P(z,x),y)$ is a polynomial multiple of $x^dy^d$ we must have $[M]J_P(x,y,z)=[M]P(P(x,y),0)$. We then observe that if $\alpha_r\neq 0$, then $r\in s_1(p)$ and in this case we have $[M](x-y)^{rd}=0$ and hence $\alpha_r$ can be ignored in \eqref{eq:s35h}. We then immediately obtain \eqref{eq:ceq2}.

If $r=r_0$ is not an integer and $u=\frac{q_2q}{d}$, then $M=x^{q_2q}y^{q_1d}$ and $r<q$. We then have
$$
[M]P(P(y,z),x) ~=~ [M]\sum_{k\leq q}P_k(P(y,z),x)+[M]\sum_{k>q}P_k(P(y,z),x),
$$
where the first summand is clearly zero. The second summand is also zero since $k>q>r$ implies $k\in\calS_{0,r}\cup\calS_{1,r}$. We show similarly that $[M]P(P(z,x),y)=0$. Moreover, the summands involving $P_r$ and $(\alpha_r+\gamma_r)$ are to be ignored in \eqref{eq:s35h}. We therefore obtain \eqref{eq:ceq2}, in which the first two summands are to be ignored.

Let us now prove \eqref{eq:ceq3}. We consider the monomial $M=x^{rd}$ and hence we have
$$
[M]J_P(x,y,z) ~=~ [M]P(P(x,0),0)+[M]P(P(0,x),0)+[M]P(P(0,0),x).
$$
The first summand is exactly the right-hand side of \eqref{eq:s35h} when $u=r$, that is
$$
c_{d,d}{\,}c_{r,r}^d+(\alpha_r+\gamma_r)+ \delta_{r,r_0}(1+\delta_{q_1,q_2}){\,}c_{q,q}{\,}c_{d,d}^{q_1}{\,}c_{q,q}^{q_2}.
$$
Similarly, the second summand is the right-hand side of \eqref{eq:s35h} when $u=0$, that is
$$
c_{d,d}{\,}c_{r,0}^d+(\alpha_r+\gamma_r)(-1)^r- \delta_{r,r_0}(1+\delta_{q_1,q_2}){\,}c_{q,q}{\,}c_{d,d}^{q_1}{\,}c_{q,0}^{q_2}.
$$
The third summand is simply equal to $\delta_{r,1}c_{d,0}=-\delta_{r,1}c_{d,d}$ since $d\in\calS_{1,r}$. We then conclude the proof by observing that $(1+(-1)^r){\,}\alpha_r=0$ since if $\alpha_r\neq 0$ then $r\in s_1(p)$.
\end{proof}

\section{Case of equations~\eqref{eq:J3} and \eqref{eq:J4}}\label{app:B}

The functional equations corresponding to \eqref{eq:J3} and \eqref{eq:J4} are respectively given by
\begin{eqnarray}
P(P(x,y),z)+P(y,P(x,z))-P(x,P(y,z)) &=& 0,\label{eq:J5}\\
P(x,P(y,z))+P(P(x,z),y)-P(P(x,y),z) &=& 0.\label{eq:J6}
\end{eqnarray}
It is then easy to see that $P$ satisfies \eqref{eq:J6} iff the polynomial $P'$ defined by $P'(x,y)=P(y,x)$ satisfies \eqref{eq:J5}.

Now, let $P\colon\calR^2\to\calR$ be a polynomial function satisfying \eqref{eq:J5} and let us show that necessarily $P=0$.

Suppose that $\deg_2(P)\geq 1$ and let us prove by contradiction that $\deg_1(P)\leq 1$. Suppose that $\deg_1(P)=d\geq 2$. By using the notation of the proof of Claim~\ref{claim:1}, we see that \eqref{eq:J5} can be rewritten as
$$
\sum_{j=0}^d\Big(\sum_{k=0}^dx^kR_k(y)\Big)^jR_j(z) + \sum_{k=0}^{d_2}\Big(\sum_{j=0}^dx^jR_j(z)\Big)^kS_k(y)-\sum_{j=0}^dx^jR_j(P(y,z))~=~0.
$$
If $d>d_2$ (resp.\ $d<d_2$), then by equating the coefficients of $x^{d^2}$ (resp.\ $x^{dd_2}$) in the expansion in powers of $x$ of each side of the latter equation, we obtain a contradiction. Therefore, we have $d=d_2$. By equating the coefficients of $x^{d^2}$ we then obtain
$$
R_d(y)^d+R_d(z)^{d-1}S_d(y) ~=~ 0,
$$
which shows that both $R_d$ and $S_d$ are nonzero constant polynomial functions.

Now, by identifying $x$ and $y$ in \eqref{eq:J5}, we obtain
\begin{equation}\label{eq:diag33}
P(P(x,x),z) ~=~ 0,
\end{equation}
or equivalently,
$$
\sum_{k=0}^dz^kS_k(P(x,x)) ~=~ 0.
$$
By equating the coefficients of $z^d$ in the latter equation we obtain $S_d=0$, a contradiction. Therefore we have $\deg_1(P)\leq 1$ and hence we have
$$
P(x,y) ~=~ x{\,}R_1(y)+R_0(y).
$$
Substituting in \eqref{eq:diag33}, we then obtain
$$
(x{\,}R_1(x)+R_0(x)){\,}R_1(z)+R_0(z) ~=~ 0.
$$
If $x{\,}R_1(x)+R_0(x)$ is nonconstant, then $R_1=0$ and then also $R_0=0$. Otherwise, if $x{\,}R_1(x)+R_0(x)$ is a constant $C$, then $R_0(z)=-C{\,}R_1(z)$ and hence $C=x{\,}R_1(x)+R_0(x)=x{\,}R_1(x)-C{\,}R_1(x)$, from which we derive $R_1=0$ and then also $R_0=0$. Finally, $P=0$, which contradicts the assumption that $\deg_2(P)\geq 1$. Hence we have $\deg_2(P)=0$, in which case we immediately see that $P=0$.

\section*{Acknowledgments}

This research is partly supported by the internal research project R-AGR-0500 of the University of Luxembourg. The authors thank Michel Rigo of the University of Li\`ege for pointing out Lucas' theorem. They also thank J\"org Tomaschek of Deloitte Austria for bringing this problem to their attention.


\begin{thebibliography}{99}

\bibitem{Bok73}
O.~G.~Bokov.
\newblock A model of Lie fields and multiple-time retarded Green’s functions of an electromagnetic field in dielectric media.
\newblock {\em Nauchn. Tr. Novosib. Gos. Pedagog. Inst.} 86:3--9, 1973.

\bibitem{Fin47}
N.~Fine.
\newblock Binomial coefficients modulo a prime.
\newblock {\em Amer. Math. Monthly} 54:589--592, 1947.

\bibitem{Fri16}
H.~Fripertinger.
\newblock On $n$-associative formal power series.
\newblock {\em Aeq. Math.} 90(2):449--467, 2016.

\bibitem{Gil74}
R.~Gilmore.
\newblock {\em Lie groups, Lie algebras, and some of their applications}.
\newblock John Wiley and Sons, New York, 1974.

\bibitem{Hal03}
B.~C.~Hall.
\newblock {\em Lie groups, Lie algebras, and representations. An elementary introduction}.
\newblock Springer-Verlag, New York, 2003.

\bibitem{Jac79}
N.~Jacobson.
\newblock {\em Lie algebras}.
\newblock Courier Dover Publications, 1979.

\bibitem{Luc78a}
E.~Lucas.
\newblock Th\'eorie des fonctions num\'eriques simplement p\'eriodiques.
\newblock {\em Am. J. Math.} 1(2): 184--196, 1878.

\bibitem{Luc78b}
E.~Lucas.
\newblock Th\'eorie des fonctions num\'eriques simplement p\'eriodiques.
\newblock {\em Am. J. Math.} 1(3): 197--240, 1878.

\bibitem{Luc78c}
E.~Lucas.
\newblock Th\'eorie des fonctions num\'eriques simplement p\'eriodiques.
\newblock {\em Am. J. Math.} 1(4): 289--321, 1878.

\bibitem{MarMat11}
J.-L.~Marichal and P.~Mathonet.
\newblock A description of n-ary semigroups polynomial-derived from integral domains.
\newblock {\em Semigroup Forum} 83:241--249, 2011


\bibitem{Toma}
J\"org Tomaschek. Deloitte Austria. Private communication.

\bibitem{Yag82}
A.~V.~Yagzhev.
\newblock A functional equation from theoretical physics.
\newblock {\em Funct. Anal. Appl.} 16(1):38--44, 1982.
\end{thebibliography}
\end{document}